\def\cl{\centerline}
\def\la{\lambda}
\def\b{\beta}
\def\pa{\partial}
\def\vs{\vspace*}
\def\C{\mathbb{C}}
\def\Vir{\hbox{Vir}}
\def\vs{\vspace*}
\numberwithin{equation}{section}
\newtheorem{theo}{Theorem}[section]
\newtheorem{defi}[theo]{Definition}
\newtheorem{lemm}[theo]{Lemma}
\newtheorem{prop}[theo]{Proposition}
\begin{document}
\begin{center}
{\bf\large Extensions of modules over a class of  Lie conformal algebras $\mathcal{W}(b)$}
\footnote {$^{\,*}$Corresponding author: lmyuan@hit.edu.cn (Lamei Yuan)}
\end{center}

\cl{Kaijing Ling, Lamei Yuan $^{\,*}$}

\cl{\small Department of Mathematics, Harbin Institute of Technology, Harbin
150001, China}

\cl{\small\footnotesize E-mails:
kjling\_edu@126.com, lmyuan@hit.edu.cn}
\vs{8pt}

{\small
\parskip .005 truein
\baselineskip 3pt \lineskip 3pt
\noindent{{\bf Abstract:} Let $\mathcal{W}(b)$ be a class of free Lie conformal algebras of rank $2$ with $\C[\pa]$-basis $\{L,H\}$ and relations
\begin{eqnarray*}
[L_\la L]=(\pa+2\la)L,\ \ [L_\la H]=\big(\pa+(1-b)\la\big)H, \ \ [H_\la H]=0,
\end{eqnarray*}
where $b$ is a nonzero complex number. In this paper, we classify extensions between two finite irreducible conformal modules over the Lie conformal algebras $\mathcal{W}(b)$.
 \vs{5pt}

\noindent{\bf Key words:}  Lie conformal algebra, conformal modules, extensions

\vs{5pt}

\noindent{\bf MR(2000) Subject Classification:}~ 17B65, 17B68

\parskip .001 truein\baselineskip 6pt \lineskip 6pt

\section{Introduction}
\vs{8pt}

Conformal modules of Lie conformal algebras were introduced in \cite{DK}.
Cheng and Kac studied finite conformal modules over the Virasoro, the Neveu-Schwarz and the current conformal algebras in \cite{CK}. In general, conformal modules are not completely reducible. Thus it is necessary to study the extension problem. In \cite{CKW1,CKW2}, Cheng, Kac and Wakimoto classified extensions between finite conformal modules over the Virasoro, the current, the Neveu-Schwarz and the semi-direct sum of the Virasoro and the current conformal algebras. In \cite{L}, Ngau Lam solved the same problem for the supercurrent conformal algebras by using the techniques developed in \cite{CKW1}.

In this paper, we aim to study extensions between conformal modules over a class of Lie conformal algebras $\mathcal{W}(b)$, which was introduced in \cite{SY} as the Lie conformal algebras associated with the Lie algebras $\mathrm{W}(a,b)$ with $a,b\in\C$. By definition, the Lie conformal algebra $\mathcal{W}(b)=\C[\pa] L\bigoplus \C[\pa] H$ with the $\la$-bracket
\begin{eqnarray}\label{la-brac}
[L_\la L]=(\pa+2\la)L,\ \ [L_\la H]=\big(\pa+(1-b)\la\big)H, \ \ [H_\la H]=0,
\end{eqnarray} where $b\in\C$.
Finite irreducible conformal modules over $\mathcal{W}(b)$ were classified in \cite{WY}. It turns out that any finite irreducible conformal $\mathcal{W}(b)$-module is free of rank one and of the form $M(\alpha,\beta,\Delta)=\C[\pa]v_{\Delta}$ with $(\Delta,\beta)\neq (0,0)$ if $b=0$ or $M(\alpha,\Delta)=\C[\pa]v_{\Delta}$ with $\Delta\neq0$ if $b\neq 0$ (see Proposition \ref{p1}).

For each $b$, the Lie conformal algebra $\mathcal{W}(b)$ is a semidirect product of the
Virasoro conformal algebra $\Vir=\C[\partial]L$ with $[L_\lambda L]=(\partial+2\lambda)L$ and the conformal $\Vir$-module $\C[\pa]H$ on which the action of $L$ is given by $L_\la H=\big(\pa+(1-b)\la\big)H$. Two special cases of $\mathcal{W}(b)$ should be pointed out.
One is $\mathcal{W}(0)$, which is called Heisenberg-Virasoro conformal algebra in \cite{SY}. Its cohomology was studied in \cite{YW}. The other one is
$\mathcal{W}(-1)$, which is exactly the $W(2,2)$ Lie conformal algebra. Its conformal derivations and central extensions were studied in \cite{YW1}.

 In this paper, we will consider extensions of conformal $\mathcal{W}(b)$-modules with $b\neq0$, since this problem for $\mathcal{W}(0)$-modules have been solved in \cite{LY}. The rest paper is organized as follows. In Section 2, we recall the notions of conformal modules and their extensions, and some known results. In Section 3, we consider extensions of conformal $\mathcal{W}(b)$-modules of the following three types:
\begin{eqnarray}
&&0\longrightarrow \C{c_\gamma}\longrightarrow E \longrightarrow M(\alpha,\Delta) \longrightarrow 0,\label{type11}\\
&&0\longrightarrow M(\alpha,\Delta)\longrightarrow E \longrightarrow \C c_\gamma \longrightarrow 0,\label{type22}\\
&&0\longrightarrow M(\bar\alpha,\bar\Delta)\longrightarrow E \longrightarrow M(\alpha,\Delta) \longrightarrow 0.\label{type33}
\end{eqnarray}
where $M(\alpha,\Delta)$ and $M(\bar\alpha,\bar\Delta)$ are irreducible $\mathcal{W}(b)$-modules, and
 $\C{c_\gamma}$ is the 1-dimensional $\mathcal{W}(b)$-module with $L_\lambda c_\gamma=H_\lambda c_\gamma=0$ and $\partial c_\gamma=\gamma c_\gamma$.
During the whole process, we use some results of the Virasoro conformal algebra from \cite{CKW1}.
The main results of our paper are summarized in
Theorems \ref{theo1}, \ref{theo2} and \ref{theo3}.

\section{Preliminaries}
\vs{8pt}
In this section, we recall some definitions, notations, and related results for later use. For more details, the reader is referred to \cite{CK,CKW1,WY}.

\begin{defi}\label{def1} \rm A {\it conformal module} $M$ over a Lie conformal algebra $R$
is a $\mathbb{C}[\partial]$-module endowed with a $\C$-linear map
$R\rightarrow {{\rm End}_{\mathbb{C}}M \bigotimes_{\mathbb{C}}\mathbb{C}[\lambda]}$, $a\mapsto a^{M}_\lambda $, satisfying the following conditions for all $a,b\in R$:
\begin{eqnarray*}
&&[a^M_\lambda,b^M_\mu]=a^M_\lambda b^M_\mu -b^M_\mu a^M_\lambda =[a_\lambda b]^M_{\lambda+\mu},\\
&&(\partial a)^M_\lambda =[\partial,a^M_\lambda]=-\lambda a^M_\lambda.
\end{eqnarray*}
A module $M$ over a conformal algebra $R$ is called {\it finite},
if $M$ is finitely generated over
$\mathbb{C}[\partial]$. A module $M$ over a conformal algebra $R$ is called {\it irreducible} if there is no nontrivial invariant subspace.
\end{defi}

The vector space $\C$ is viewed as a trivial module with trivial actions of $\pa$ and $R$. For a fixed nonzero complex constant $\alpha$, there
is a natural $\C[\pa]$-module structure on $\C c_\alpha$, such that $\C c_\alpha=\C$ and $\pa c_\alpha=\alpha c_\alpha$. Then
$\C c_\alpha$ becomes an $R$-module with trivial action of $R$.

Let $V$ and $W$ be two modules over a Lie conformal algebra (or a Lie algebra) $R$. An {\it extension} of $W$ by $V$ is an exact sequence of $R$-modules of the form
\begin{eqnarray}\label{Em}
0\longrightarrow V\xlongrightarrow{i} E \xlongrightarrow{p} W \longrightarrow 0.
\end{eqnarray}
Two extensions $0\longrightarrow V\xlongrightarrow{i} E \xlongrightarrow{p} W \longrightarrow 0$ and $0\longrightarrow V\xlongrightarrow{i'} E' \xlongrightarrow{p'} W \longrightarrow 0$ are said to be {\it equivalent} if there exists a commutative diagram of the form
\begin{equation*}
\begin{CD}
0@>>> V @>i>{\rm }>  E @>p>> W@>>> 0\\
@. @V{1_V}VV @V\psi VV @V{1_W}VV\\
0@>>> V @>i'>{\rm }> E' @>p'>>
W @>>> 0,
\end{CD}
\end{equation*}
where $1_V: V\rightarrow V$ and $1_W: W\rightarrow W$ are the respective identity maps and $\psi: E\rightarrow E'$ is a homomorphism of modules.

The direct sum of modules $V\oplus W$ obviously gives rise to an extension. Extensions equivalent to it are called {\it trivial extensions}. In general, an extension can be thought of as the direct sum of vector spaces $E=V\oplus W$, where $V$ is a submodule of $E$, while for $w$ in $W$ we have:
\begin{equation*}
a\cdot w=aw+\phi_a(w),\ \ a\in R,
\end{equation*}
where $\phi_a:W\rightarrow V$ is a linear map satisfying the cocycle condition: $$\phi_{[a,b]}(w)=\phi_a(bw)+a\phi_b(w)-\phi_b(aw)-b\phi_a(w),\ \, b\in R.$$ The set of these cocycles form a vector space over $\C$. Cocycles equivalent to the trivial extension are called {\it coboundaries}. They form a subspace and the quotient space by it is denoted by $\textrm{Ext}(W, V).$

For the Virasoro conformal algebra $\Vir$, it was shown in
\cite{CK} that all free nontrivial $\Vir$-modules of rank one
over $\mathbb{C}[\partial]$ are the following ones $(\Delta,
\alpha\in \mathbb{C})$:
\begin{eqnarray}
M(\alpha,\Delta)=\mathbb{C}[\partial]v_\Delta,\ \ L_\lambda
v=(\partial+\alpha+\Delta \lambda)v_\Delta.
\end{eqnarray}
The module $M(\alpha,\Delta)$ is irreducible if and only if
$\Delta\neq 0$. The module $M(\alpha,0)$ contains a unique
nontrivial submodule $(\partial +\alpha)M(\alpha,0)$ isomorphic to
$M(\alpha,1)$.  Moreover, the modules $M(\alpha,\Delta)$ with
$\Delta\neq 0$ exhaust all finite irreducible non-1-dimensional
$\Vir$-modules.

In \cite{CKW1}, extensions over the Virasoro conformal modules of the following types have been classified:
\begin{eqnarray}
&&0\longrightarrow \C{c_\gamma}\longrightarrow E \longrightarrow M(\alpha,\Delta) \longrightarrow 0,\label{type1}\\
&&0\longrightarrow M(\alpha,\Delta)\longrightarrow E \longrightarrow \C c_\gamma \longrightarrow 0,\label{type2}\\
&&0\longrightarrow M(\bar\alpha,\bar\Delta)\longrightarrow E \longrightarrow M(\alpha,\Delta) \longrightarrow 0. \label{type3}
\end{eqnarray}

The following are the corresponding results.
\begin{theo}\label{th2}(cf. Ref. \cite{CKW1})
Nontrivial extensions of the form \eqref{type1}\ exist if and only if  $\alpha+\gamma=0$ and $\Delta=1$ or $2$. In these cases, they are given (up to equivalence) by $$L_\lambda v_\Delta=(\partial+\alpha+\Delta\lambda)v_\Delta+f(\lambda)c_\gamma,$$ where
\begin{itemize}
\item[{\rm (i)}] $f(\lambda)=c_2\lambda^2$, for $\Delta=1$ and $c_2\neq0$.
\item [{\rm (ii)}] $f(\lambda)=c_3\lambda^3$, for $\Delta=2$ and $c_3\neq0$.
\end{itemize}
Furthermore, all trivial cocycles are given by scalar multiples of the polynomial $f(\lambda)=\alpha+\gamma+\Delta\lambda$.
\end{theo}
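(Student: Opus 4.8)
The plan is to realize such an extension concretely and reduce the problem to a single polynomial functional equation. Since $M(\alpha,\Delta)=\C[\pa]v_\Delta$ is free, the sequence splits over $\C[\pa]$, so we may write $E=\C[\pa]v_\Delta\oplus\C c_\gamma$ with $\C c_\gamma$ the submodule; the $\Vir$-action is then forced to take the shape $L_\lambda v_\Delta=(\pa+\alpha+\Delta\lambda)v_\Delta+f(\lambda)c_\gamma$ for some $f(\lambda)\in\C[\lambda]$, because $L_\lambda c_\gamma=0$ and $\C c_\gamma$ is one-dimensional. First I would check that the sesquilinearity axiom $(\pa L)_\lambda=-\lambda L_\lambda$ holds for every choice of $f$, so that the only genuine constraint comes from the commutator axiom. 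Imposing $[L^E_\lambda,L^E_\mu]=[L_\lambda L]^E_{\lambda+\mu}=(\lambda-\mu)L^E_{\lambda+\mu}$ on $v_\Delta$ and reading off the $c_\gamma$-component yields
\[
(\alpha+\gamma+\lambda+\Delta\mu)f(\lambda)-(\alpha+\gamma+\mu+\Delta\lambda)f(\mu)=(\lambda-\mu)f(\lambda+\mu),
\]
which I shall call $(\star)$.

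Next I would pin down the coboundaries: replacing the lift $v_\Delta$ by $v_\Delta+t\,c_\gamma$ with $t\in\C$ changes $f(\lambda)$ into $f(\lambda)-t(\alpha+\gamma+\Delta\lambda)$, so the trivial cocycles are exactly the scalar multiples of $\alpha+\gamma+\Delta\lambda$; this already establishes the last assertion of the theorem. To solve $(\star)$ I would set $\mu=0$, obtaining $(\alpha+\gamma)f(\lambda)=(\alpha+\gamma+\Delta\lambda)f(0)$. If $\alpha+\gamma\neq0$ this forces $f$ to be a multiple of $\alpha+\gamma+\Delta\lambda$, hence a coboundary, so no nontrivial extension exists; this proves the necessity of $\alpha+\gamma=0$.

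Assume now $\alpha+\gamma=0$ (and $\Delta\neq0$, as required for irreducibility). The $\mu=0$ relation then gives $f(0)=0$, and the key step is to differentiate $(\star)$ with respect to $\mu$ and set $\mu=0$; this produces the linear relation $(\Delta+1)f(\lambda)-\lambda f'(\lambda)=\Delta\lambda f'(0)$. Writing $f(\lambda)=\sum_{k\geq1}a_k\lambda^k$ and comparing coefficients of $\lambda^k$ shows $(\Delta+1-k)a_k=0$ for every $k\geq2$, so all coefficients vanish except possibly the one at $k=\Delta+1$. Hence, modulo the coboundary $a_1\lambda$, one is reduced to $f(\lambda)=\lambda^{\Delta+1}$, which can occur only when $\Delta$ is a positive integer.

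Finally I would substitute $f(\lambda)=\lambda^{\Delta+1}$ back into $(\star)$ with $\alpha+\gamma=0$ and expand $(\lambda+\mu)^{\Delta+1}$ by the binomial theorem. Setting $m=\Delta+1$, the left-hand side involves only the four monomials $\lambda^{m+1},\lambda^m\mu,\lambda\mu^m,\mu^{m+1}$, so for $\Delta\geq2$ it contains no term $\lambda^{\Delta}\mu^2$, whereas the right-hand side produces such a term with coefficient $\binom{m}{2}-m=\tfrac{(\Delta+1)(\Delta-2)}{2}$; this forces $\Delta=2$, while the case $\Delta=1$ is confirmed by direct substitution. I expect this last verification to be the main obstacle, since the differentiated relation is only a necessary condition and the elimination of the spurious candidates $\Delta\geq3$ requires the full equation $(\star)$. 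Collecting the two surviving cases yields $f=c_2\lambda^2$ for $\Delta=1$ and $f=c_3\lambda^3$ for $\Delta=2$, with the cocycle nontrivial precisely when the relevant constant is nonzero, as claimed.
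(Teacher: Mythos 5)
Your proof is correct, but note first that the paper contains no proof of this statement: Theorem \ref{th2} is imported verbatim from Cheng--Kac--Wakimoto \cite{CKW1} and is only used as a black box (in the proofs of Theorems \ref{theo1} and \ref{theo3}). So the comparison can only be with the paper's analogous computations for $\mathcal{W}(b)$ and with \cite{CKW1} itself. Your setup --- realizing $E=\C[\pa]v_\Delta\oplus\C c_\gamma$, deriving the functional equation $(\star)$, and identifying the coboundaries as the scalar multiples of $\alpha+\gamma+\Delta\lambda$ --- coincides with the paper's own treatment of the $\mathcal{W}(b)$ analogue: your $(\star)$ is exactly equation \eqref{wl1}, and your coboundary computation is Lemma \ref{lem1} restricted to $g=0$ (the paper allows the more general lift $\varphi(\pa)v_\Delta+ac_\gamma$ and shows $\varphi$ must be constant, which amounts to the same conclusion); your $\mu=0$ substitution disposing of $\alpha+\gamma\neq0$ is the paper's Case 1. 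Where you genuinely depart from \cite{CKW1} is in solving $(\star)$ when $\alpha+\gamma=0$: differentiating in $\mu$ at $\mu=0$ to get $(\Delta+1)f(\lambda)-\lambda f'(\lambda)=\Delta\lambda f'(0)$, which forces $f$ to be a monomial $c\lambda^{\Delta+1}$ modulo coboundaries, is a cleaner and shorter route than the degree-by-degree analysis of the cocycle equation; the price, which you correctly acknowledge, is that the differentiated relation is only necessary, so the spurious candidates must be killed by returning to $(\star)$, and your coefficient-of-$\lambda^{\Delta}\mu^2$ computation does eliminate all $\Delta\geq3$. The one small incompleteness: you verify by substitution that $\lambda^2$ solves $(\star)$ for $\Delta=1$, but you never actually check that $\lambda^3$ solves $(\star)$ for $\Delta=2$ --- the coefficient argument only shows that $\Delta=2$ survives that single test, and the existence half of the theorem for $\Delta=2$ needs the full verification. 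It is routine: for $\Delta=2$ one has
\begin{equation*}
(\lambda+2\mu)\lambda^3-(\mu+2\lambda)\mu^3=\lambda^4+2\lambda^3\mu-2\lambda\mu^3-\mu^4=(\lambda-\mu)(\lambda+\mu)^3,
\end{equation*}
so the identity holds and your classification stands as stated.
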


\begin{theo}\label{th3}(cf. Ref. \cite{CKW1})
There are nontrivial extensions of Virasoro conformal modules of the form \eqref{type2} if and only if $\alpha+\gamma=0$ and $\Delta=1$. These extensions are given (up to equivalence) by

\begin{eqnarray*}
L_\lambda c_\gamma=f(\partial,\lambda)v_\Delta,\ \
\partial c_\gamma=\gamma c_\gamma+h(\partial)v_\Delta,
\end{eqnarray*}
where $f(\partial,\lambda)=h(\partial)=a_0,$ and $a_0\neq 0$.
\end{theo}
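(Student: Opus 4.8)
The plan is to use the cocycle description from Section~2. Realize $E=\C c_\gamma\oplus M(\alpha,\Delta)$ as a vector space in which $M(\alpha,\Delta)=\C[\pa]v_\Delta$ is the submodule, and write the most general action on the lifted generator $c_\gamma$ as
\begin{eqnarray*}
\pa c_\gamma=\gamma c_\gamma+h(\pa)v_\Delta,\qquad L_\la c_\gamma=f(\pa,\la)v_\Delta,
\end{eqnarray*}
with unknowns $h(\pa)\in\C[\pa]$ and $f(\pa,\la)\in\C[\pa,\la]$. The goal is to determine for which triples $(\alpha,\gamma,\Delta)$ these data define a genuine conformal module structure, and then to classify the data modulo coboundaries.

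First I would impose the two identities forced by the conformal module axioms. The sesquilinearity relation $L_\la(\pa c_\gamma)=(\pa+\la)L_\la c_\gamma$, which follows from $(\pa L)_\la=-\la L_\la$ together with $\pa$ acting as a derivation, yields, after using $L_\la v_\Delta=(\pa+\alpha+\Delta\la)v_\Delta$ and $L_\la(p(\pa)v_\Delta)=p(\pa+\la)L_\la v_\Delta$, the single polynomial identity
\begin{eqnarray*}
(\pa+\la-\gamma)\,f(\pa,\la)=h(\pa+\la)(\pa+\alpha+\Delta\la).
\end{eqnarray*}
The Virasoro bracket relation $[L_\la,L_\mu]c_\gamma=(\la-\mu)L_{\la+\mu}c_\gamma$ then gives a second identity involving $f$ alone.

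The crux, and the step I expect to be the main obstacle, is the first identity read as a divisibility statement in the domain $\C[\pa,\la]$: the factor $(\pa+\la-\gamma)$ must divide the right-hand side. Substituting its root $\pa=\gamma-\la$ into $h(\pa+\la)(\pa+\alpha+\Delta\la)$ produces $h(\gamma)\big(\gamma+\alpha+(\Delta-1)\la\big)$, which must vanish identically in $\la$. Hence either $h(\gamma)=0$, or else $\alpha+\gamma=0$ and $\Delta=1$. In the latter case the two factors $(\pa+\la-\gamma)$ and $(\pa+\alpha+\Delta\la)$ coincide, so $f(\pa,\la)=h(\pa+\la)$, and a direct substitution shows that the second (Virasoro) identity is then satisfied automatically for every $h$.

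It remains to reduce modulo coboundaries, which amounts to replacing the lift $c_\gamma$ by $c_\gamma+g(\pa)v_\Delta$ for $g\in\C[\pa]$. A short computation shows this sends $h(\pa)\mapsto h(\pa)+(\pa-\gamma)g(\pa)$ and $f\mapsto f+g(\pa+\la)(\pa+\alpha+\Delta\la)$. In the branch $h(\gamma)=0$ one writes $h=(\pa-\gamma)h_1$ and chooses $g=-h_1$ to kill $h$ entirely, so these extensions are trivial; recognizing this branch as coboundary is the delicate bookkeeping point. In the branch $\alpha+\gamma=0,\ \Delta=1$, the same move reduces $h$ modulo the ideal $(\pa-\gamma)$ to its constant value $h(\gamma)=a_0$, whence $f=h=a_0$, and nontriviality forces $a_0\neq0$. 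This reproduces exactly the stated classification.
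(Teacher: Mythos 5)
Your proof is correct, but note that the paper itself gives no proof of Theorem~\ref{th3}: it is quoted from Cheng--Kac--Wakimoto, so the natural in-paper comparison is with the proof of its $\mathcal{W}(b)$-analogue, Theorem~\ref{theo2}, which reproves exactly this Virasoro computation (plus the trivial $H$-part). There the argument runs in the opposite order from yours: the paper first sets $\mu=0$ in the bracket equation \eqref{s9} to conclude that $\bar f$ is either a constant or of the coboundary shape $(\bar\partial+\Delta\lambda)\phi(\bar\partial+\lambda)$, and only then feeds this into the sesquilinearity equation \eqref{s10} to determine $h$ and force $\alpha+\gamma=0$, $\Delta=1$. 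You instead take the sesquilinearity identity $(\pa+\la-\gamma)f(\pa,\la)=h(\pa+\la)(\pa+\alpha+\Delta\la)$ as primary and read it as a divisibility statement in $\C[\pa,\la]$: evaluating at $\pa=\gamma-\la$ yields the dichotomy ``$h(\gamma)=0$ or ($\alpha+\gamma=0$ and $\Delta=1$)'' in one stroke, after which the bracket equation is a pure verification --- indeed automatic, since with $\alpha=-\gamma$, $\Delta=1$ and $f(\pa,\la)=h(\pa+\la)$ both sides of \eqref{s5} reduce to $(\la-\mu)h(\pa+\la+\mu)$. Your route is the more elementary one: it gets the necessary conditions on $(\alpha,\gamma,\Delta)$ without analyzing the general solution of the $\mu=0$ bracket equation, which is the step stated somewhat loosely in the paper's proof of Theorem~\ref{theo2}. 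One point you should make explicit: in the branch $h(\gamma)=0$, choosing $g=-h_1$ kills $h$, but triviality of the extension also requires the transformed $f$ to vanish; this is immediate from your own identity, since the new data still satisfies it and $(\pa+\la-\gamma)f_{\mathrm{new}}(\pa,\la)=0$ forces $f_{\mathrm{new}}=0$ in the integral domain $\C[\pa,\la]$, but as written the conclusion ``so these extensions are trivial'' skips that half-line.
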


\begin{theo}\label{th4}(cf. Ref. \cite{CKW1}) Nontrivial extensions of the form \eqref{type3} of Virasoro conformal modules exist only if $\alpha=\bar\alpha$ and $\Delta-\bar\Delta=0,1,2,3,4,5,6.$ In these cases, they are given (up to equivalence) by $$L_\lambda v_\Delta=(\partial+\alpha+\Delta\lambda)v_\Delta+f(\partial,\lambda)v_{\bar\Delta }.$$
And the following is a complete list of values of $\Delta$ and $\bar\Delta$ along with the corresponding polynomials $f(\pa,\la),$ whose nonzero scalar multiples give rise to nontrivial extensions (by replacing $\partial$ by $\partial+\alpha$):
\begin{itemize}
\item[{\rm (i)}] $\Delta=\bar\Delta, f_1(\pa,\la)=c_0+c_1\la, (c_0,c_1)\neq(0,0).$
\item [{\rm (ii)}] $\Delta=1, \bar\Delta=0, f_2(\pa,\la)=c_0\pa+c_1\pa\la+c_2\la^2,$ where $(c_0,c_1,c_2)\neq(0,0,0)$.
\item [{\rm (iii)}]$\Delta-\bar\Delta=2, f_3(\pa,\la)=\la^2(2\pa+\la)$.
\item [{\rm (iv)}]$\Delta-\bar\Delta=3, f_4(\pa,\la)=\pa\la^2(\pa+\la)$.
\item [{\rm (v)}]$\Delta-\bar\Delta=4, f_5(\pa,\la)=\la^2(4\pa^3+6\pa^2\la-\pa\la^2+\bar\Delta\la^3)$.
\item [{\rm (vi)}]$\Delta=5,\bar\Delta=0, f_6(\pa,\la)=5\pa^4\la^2+10\pa^2\la^4-\pa\la^5$.
\item [{\rm (vi')}]$\Delta=1,\bar\Delta=-4, f_{6'}(\pa,\la)=\pa^4\la^2-10\pa^2\la^4-17\pa\la^5-8\la^6$.
\item [{\rm (vii)}]$\Delta=\frac72\pm\frac{\sqrt{19}}2, \Delta-\bar\Delta=6, f_{7}(\pa,\la)=\pa^4\la^3-(2\bar\Delta+3)\pa^3\la^4-3\bar\Delta\pa^2\la^5-(3\bar\Delta+1)\pa\la^6-(\bar\Delta+\frac9{28})\la^7$.
\end{itemize}
\end{theo}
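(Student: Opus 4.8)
The plan is to present an arbitrary extension of type \eqref{type3} on the space $E=\C[\pa]v_\Delta\oplus\C[\pa]v_{\bar\Delta}$, with $\C[\pa]v_{\bar\Delta}\cong M(\bar\alpha,\bar\Delta)$ a genuine submodule and the twist concentrated in a single polynomial $f(\pa,\la)\in\C[\pa,\la]$ via
\[
L_\la v_\Delta=(\pa+\alpha+\Delta\la)v_\Delta+f(\pa,\la)v_{\bar\Delta}.
\]
First I would convert the module axiom into a functional equation for $f$. Since $[L_\la L]=(\pa+2\la)L$ acts on a module as $(\la-\mu)L_{\la+\mu}$ (using $(\pa L)_{\la+\mu}=-(\la+\mu)L_{\la+\mu}$), and since an operator passing $\pa$ replaces $g(\pa)$ by $g(\pa+\la)$, comparing the $v_{\bar\Delta}$-components of $[L_\la,L_\mu]v_\Delta=(\la-\mu)L_{\la+\mu}v_\Delta$ yields
\begin{multline*}
(\la-\mu)f(\pa,\la+\mu)=(\pa+\alpha+\la+\Delta\mu)f(\pa,\la)-(\pa+\alpha+\mu+\Delta\la)f(\pa,\mu)\\
+(\pa+\bar\alpha+\bar\Delta\la)f(\pa+\la,\mu)-(\pa+\bar\alpha+\bar\Delta\mu)f(\pa+\mu,\la).
\end{multline*}
This one identity controls the entire classification, and alongside it I would record that the equivalence $v_\Delta\mapsto v_\Delta+\phi(\pa)v_{\bar\Delta}$ shifts $f$ by the coboundary $(\pa+\bar\alpha+\bar\Delta\la)\phi(\pa+\la)-(\pa+\alpha+\Delta\la)\phi(\pa)$.

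The next step is to eliminate $\alpha,\bar\alpha$. Setting $\mu=0$ in the functional equation collapses it to $(\bar\alpha-\alpha)f(\pa,\la)=(\pa+\bar\alpha+\bar\Delta\la)f(\pa+\la,0)-(\pa+\alpha+\Delta\la)f(\pa,0)$; when $\alpha\neq\bar\alpha$ this expresses $f$ entirely through the one-variable function $f(\pa,0)$, and the resulting expression is exactly a coboundary. Hence nontrivial extensions force $\alpha=\bar\alpha$, after which I may absorb $\alpha$ by the substitution $\pa\mapsto\pa+\alpha$ (as indicated in the statement) and assume $\alpha=\bar\alpha=0$.

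With $\alpha=\bar\alpha=0$ the functional equation is homogeneous of degree $1+\deg f$ jointly in $(\pa,\la,\mu)$, so I can split $f$ into homogeneous components and analyze each degree $d$ separately; the coboundary of homogeneous degree $d$ is $(\pa+\bar\Delta\la)(\pa+\la)^{d-1}-(\pa+\Delta\la)\pa^{d-1}$. Writing $f(\pa,\la)=\sum_j a_j\pa^{d-j}\la^j$ and comparing coefficients of $\pa^i\la^j\mu^k$ turns the equation into a finite linear system in the $a_j$ whose consistency imposes relations among $d,\Delta,\bar\Delta$. This is exactly where the bound $\Delta-\bar\Delta\in\{0,1,\dots,6\}$ emerges: the top admissible degree is $d=(\Delta-\bar\Delta)+1$, and the leading-coefficient relation becomes inconsistent once $\Delta-\bar\Delta\geq 7$. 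Solving the system degree by degree and discarding coboundaries then produces the normalized representatives $f_1,\dots,f_7$, with general $\alpha$ recovered by undoing the shift.

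I expect the genuine obstacle to be the borderline top-degree analysis for $\Delta-\bar\Delta=6$ (degree $d=7$). Here the coefficient comparison does not close on its own: after fixing the lower coefficients it leaves a residual quadratic constraint $\Delta^2-7\Delta+\tfrac{15}{2}=0$, whose roots $\Delta=\tfrac72\pm\tfrac{\sqrt{19}}2$ single out case (vii). The delicate points are verifying that the candidate polynomials at the high degrees (v)–(vii), including the two exceptional cases $f_6,f_{6'}$ with $\Delta-\bar\Delta=5$, satisfy the \emph{full} functional equation rather than merely its leading part, and confirming that no solutions survive beyond degree $7$; the lower-degree cases (i)–(iv) follow from the same bookkeeping but are computationally far lighter.
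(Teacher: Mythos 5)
This statement is not proved in the paper at all: Theorem \ref{th4} is quoted verbatim from \cite{CKW1} (``cf.\ Ref.''), so there is no in-paper proof to compare against. What can be compared is your strategy versus the method the paper itself uses for the analogous $\mathcal{W}(b)$ computations in Section 3, and versus the original argument of \cite{CKW1}; on that score your outline is the right one and matches both. Your functional equation for $f$ agrees with the paper's equation \eqref{wl}, your description of coboundaries agrees (up to sign) with Lemma \ref{lem4}, your $\mu=0$ specialization and the conclusion that $\alpha\neq\bar\alpha$ forces triviality reproduce the paper's Case 1 of the Section 3 analysis, and your reduction to homogeneous components after the shift $\pa\mapsto\pa+\alpha$ is exactly the homogeneity argument the paper invokes for the equation \eqref{svlm1}. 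The residual quadratic you exhibit for the degree-$7$ case, $\Delta^2-7\Delta+\tfrac{15}{2}=0$, indeed has roots $\tfrac72\pm\tfrac{\sqrt{19}}2$, consistent with item (vii).

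The genuine gap is that the substance of the classification is asserted rather than executed. Everything in items (iii)--(vii), the completeness of the list (no nontrivial solutions in degrees other than those shown, and none at all once $\Delta-\bar\Delta\geq 7$), and the nontriviality of each listed $f_i$ modulo coboundaries, all live inside your sentence ``Solving the system degree by degree and discarding coboundaries then produces the normalized representatives $f_1,\dots,f_7$.'' For a classification theorem this degree-by-degree linear algebra \emph{is} the proof: one must show for each homogeneous degree $d$ that the solution space of \eqref{wl} modulo the one-dimensional space of coboundaries is zero except in the listed cases, verify that $f_5$, $f_6$, $f_{6'}$, $f_7$ solve the full two-variable equation (not merely the $\mu=0$ or leading-coefficient consequences), and give a uniform argument killing all degrees $d\geq 8$ --- none of which is sketched beyond naming it as ``delicate.'' You also flag correctly that $\Delta-\bar\Delta=5$ splits into the two exceptional pairs $(5,0)$ and $(1,-4)$, but again without deriving why the generic pair with difference $5$ admits nothing. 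So the proposal is a correct and well-oriented plan whose decisive computations remain to be done; as it stands it does not constitute a proof of the statement.
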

The following result, due to \cite{WY}, gives all finite free irreducible conformal modules over the Lie conformal algebras $\mathcal{W}(b)$.

\begin{prop}\label{p1} (cf. Ref. \cite{WY}) Any finite free irreducible conformal
 module over the Lie conformal algebra $\mathcal{W}(b)$
is of the form
\begin{eqnarray*}
M=\mathbb{C}[\partial]v_\Delta,\ L_\lambda
v_\Delta=(\partial+\alpha+\Delta \lambda)v_\Delta, \ H_\lambda v_\Delta=\delta_{b,0}\b v_\Delta, \ \
\end{eqnarray*}
with $\Delta,\, \alpha,\, \beta\in\C,$ and $(\Delta,\beta)\neq (0,0)$.
\end{prop}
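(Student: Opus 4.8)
The plan is to restrict $M$ to the Virasoro subalgebra $\Vir=\C[\partial]L$ and to exploit the classification of finite $\Vir$-modules together with the fact that $\C[\partial]H$ is an abelian ideal. Viewing $M$ as a free $\C[\partial]$-module of finite rank, I would record the three constraints coming from the $\lambda$-brackets: the relation $[L_\lambda L]=(\partial+2\lambda)L$ makes $M$ a finite conformal $\Vir$-module, the relation $[H_\lambda H]=0$ says that the operators $H_\lambda$ pairwise commute, and the mixed relation $[L_\lambda H]=(\partial+(1-b)\lambda)H$ couples the two actions via $[L_\mu,H_\lambda]=(-b\mu-\lambda)H_{\mu+\lambda}$. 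The strategy is to first reduce the rank to one, and then to determine the two actions explicitly.

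For the rank reduction I would observe that, because $\C[\partial]H$ is an abelian ideal, both the image $M'=\sum_\lambda H_\lambda M$ and the common kernel $M''=\bigcap_\lambda \Ker H_\lambda$ are $\mathcal{W}(b)$-submodules (their $\Vir$-stability follows from the mixed relation above). By irreducibility, either $H$ acts trivially or $M''=0$ and $M'=M$. In either case I would pass to a minimal nonzero, hence irreducible, $\Vir$-submodule $N\subseteq M$, which exists since $M$ is a finite $\Vir$-module; as $M$ is free and therefore torsion-free, $N$ cannot be a torsion module $\C c_\gamma$, so by Cheng--Kac $N\cong M(\alpha,\Delta)$ with $\Delta\neq0$, free of rank one. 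The remaining task is to upgrade $N$ to a $\mathcal{W}(b)$-submodule, i.e.\ to show that $N$ is stable under $H$; once this is established, irreducibility forces $M=N$ and the rank is one. This is the step I expect to be the main obstacle, and it is where the structure theory of finite $\Vir$-modules from \cite{CK,CKW1} is genuinely used.

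Having reduced to $M=\C[\partial]v$ of rank one, I would write $L_\lambda v=f(\partial,\lambda)v$ and $H_\lambda v=g(\partial,\lambda)v$. The Virasoro relation gives the standard functional equation whose only nontrivial free rank-one solution is $f(\partial,\lambda)=\partial+\alpha+\Delta\lambda$. For $g$ I would first use $[H_\lambda H]=0$, which reads $g(\partial+\lambda,\mu)g(\partial,\lambda)=g(\partial+\mu,\lambda)g(\partial,\mu)$; comparing the second-highest power of $\partial$ yields $a_d(\lambda)a_d(\mu)\,d\,(\lambda-\mu)=0$ for the leading coefficient $a_d$ of degree $d$, which is impossible unless $d=0$, so $g=g(\lambda)$ is independent of $\partial$. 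Feeding this into the mixed relation collapses it to $-\mu\,g(\mu)=(-b\lambda-\mu)\,g(\lambda+\mu)$, and setting $\mu=0$ gives $b\lambda\,g(\lambda)=0$. Hence $g=0$ when $b\neq0$, while for $b=0$ the equation becomes $g(\mu)=g(\lambda+\mu)$, forcing $g$ to be a constant $\beta$. This is precisely $H_\lambda v=\delta_{b,0}\beta v$.

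Finally I would impose irreducibility to obtain the condition $(\Delta,\beta)\neq(0,0)$. If $\Delta=0$ and $\delta_{b,0}\beta=0$, then $L_\lambda v=(\partial+\alpha)v$ and $H_\lambda v=0$, so the proper nonzero $\C[\partial]$-submodule $(\partial+\alpha)M$ is invariant under both $L$ and $H$, contradicting irreducibility; conversely one checks directly that every module with $(\Delta,\beta)\neq(0,0)$ is irreducible, since for $\Delta\neq0$ it is already $\Vir$-irreducible, and for $b=0,\ \Delta=0,\ \beta\neq0$ the nonzero action of $H$ destroys the only candidate submodule $(\partial+\alpha)M$. This yields exactly the asserted list and completes the classification.
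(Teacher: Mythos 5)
First, a point about the comparison you asked for: the paper contains no proof of Proposition \ref{p1} at all --- it is quoted verbatim from \cite{WY}, where the classification is actually carried out --- so your attempt has to be judged on its own merits rather than against an argument in this paper. Judged that way, it has a genuine gap, and the gap sits exactly at the heart of the proposition: the reduction to rank one. You flag it yourself --- you never prove that the minimal nonzero $\Vir$-submodule $N\subseteq M$ is stable under $H$, you only announce it as ``the main obstacle'' where the structure theory ``is genuinely used.'' Everything you do carry out (the commuting-family argument forcing $g$ to be independent of $\pa$, the collapse of the mixed relation to $b\la g(\la)=0$, the irreducibility discussion at the end) concerns the rank-one case and is correct, but that is the routine part; ruling out irreducible free modules of rank $\geq 2$ is the actual content of the result being cited.

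Worse, the step you postpone is not merely unproven but false in the generality you need, and your own last paragraph exhibits the counterexample. The proposition covers $b=0$, and for $b=0$ the module $M(\alpha,\beta,0)$ with $\b\neq0$ (i.e.\ $L_\la v=(\pa+\alpha)v$, $H_\la v=\b v$) is an irreducible $\mathcal{W}(0)$-module, yet it is \emph{not} $\Vir$-irreducible: its unique nontrivial $\Vir$-submodule $N=(\pa+\alpha)M\cong M(\alpha,1)$ is a minimal $\Vir$-submodule, and $N$ is not $H$-stable, because $H_\la\big((\pa+\alpha)v\big)=\b(\pa+\alpha)v+\b\la v$ and the coefficient $\b v$ of $\la$ lies outside $N$. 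So ``a minimal $\Vir$-submodule is $H$-stable, hence $M=N$'' cannot serve as a lemma; for $b\neq0$ it is true only a posteriori (because the completed classification forces $M$ to be $\Vir$-irreducible), and you give no mechanism for proving it a priori --- any such mechanism would essentially be the missing classification argument. Two secondary problems: the existence of a minimal nonzero $\Vir$-submodule itself needs justification, since finite conformal modules need not satisfy the descending chain condition (already $\C[\pa]$ with trivial action has the infinite chain $(\pa)\supset(\pa^2)\supset\cdots$); and the dichotomy you set up via $M'=\sum_\la H_\la M$ and $M''=\bigcap_\la\Ker H_\la$ is never used --- in the branch where $H$ acts trivially, Cheng--Kac \cite{CK} already finishes the proof without any minimal submodule, and in the other branch (the only hard one) the dichotomy contributes nothing.
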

 In this paper, we denote the $\mathcal{W}(b)$-module $M$ from Proposition \ref{p1} by $M(\alpha,\beta,\Delta)$ if $b=0$, and $M(\alpha,\Delta)$ if $b\neq0$, respectively.

\section{Extensions of conformal $\mathcal{W}(b)$-modules}
\vs{8pt}

In this section, we always assume that $b\neq0$. In this case, it follows from Proposition \ref{p1} that any finite free irreducible conformal
 module over the Lie conformal algebra $\mathcal{W}(b)$
is of the form
\begin{eqnarray*}
M(\alpha,
\Delta)=\mathbb{C}[\partial]v_\Delta,\ L_\lambda
v_\Delta=(\partial+\alpha+\Delta \lambda)v_\Delta, \ H_\lambda v_\Delta=0, \ \
\end{eqnarray*}
where $\Delta,\, \alpha\in\C,$ and $\Delta\neq 0$.


Let $M$ be a $\C[\partial]$-module. By Definition \ref{def1}, a $\mathcal{W}(b)$-module structure on $M$ is given by $L^M_\lambda, H^M_\mu \in {\rm End}_\C(M)[\lambda]$ such that
\begin{eqnarray}
&&[L^M_\lambda, L^M_\mu]=(\lambda-\mu)L^M_{\lambda+\mu},\label{WL}\\
&&[L^M_\lambda, H^M_\mu]=-(b\la+\mu)H^M_{\lambda+\mu},\label{WLH}\\
&&[H^M_\lambda, L^M_\mu]=(\lambda+b\mu)H^M_{\lambda+\mu},\label{WHL}\\
&&[H^M_\lambda, H^M_\mu]=0, \label{WH}\\
&&[\partial, L^M_\lambda]=-\lambda L^M_\lambda ,\label{Wa}\\
&&[\partial, H^M_\lambda]=-\lambda H^M_\lambda.\label{Wb}
\end{eqnarray}

First, we consider extensions of $\mathcal{W}(b)$-modules of the form
\begin{eqnarray}\label{w1m}
0\longrightarrow \C{c_\gamma}\longrightarrow E \longrightarrow M(\alpha,\Delta) \longrightarrow 0.
\end{eqnarray}
 As a module over $\C[\partial]$, $E$ in \eqref{w1m} is isomorphic to $\C {c_\gamma}\oplus M(\alpha,\Delta)$, where $\C {c_\gamma}$ is a $\mathcal{W}(b)$-submodule, and $M(\alpha,\Delta)=\C[\partial]v_\Delta$ such that the following identities hold in $E$:
\begin{eqnarray}\label{cm1}
L_\lambda v_\Delta=(\partial+\alpha+\Delta\lambda)v_\Delta+f(\lambda)c_\gamma,\ H_\lambda v_\Delta=g(\lambda)c_\gamma,
\end{eqnarray}
where $f(\lambda)=\sum_{n\geqslant0}f_n\lambda^n,\,g(\lambda)=\sum_{n\geqslant0}g_n\lambda^n,$ with $ f_n,\,g_n\in\C$.

\begin{lemm}\label{lem1} All trivial extensions of the form \eqref{w1m} are given by \eqref{cm1}, where $f(\lambda)$ is a scalar multiple of $\alpha+\gamma+\Delta\lambda$, and $g(\lambda)=0$.
\end{lemm}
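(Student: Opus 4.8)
The plan is to identify the trivial extensions (those equivalent to the split extension $\C c_\gamma \oplus M(\alpha,\Delta)$) by computing the coboundaries directly, exploiting the fact that any equivalence is pinned down by its effect on the single generator $v_\Delta$ of $M(\alpha,\Delta)$. First I would recall that an equivalence $\psi$ of \eqref{w1m} with the split extension must restrict to the identity on the submodule $\C c_\gamma$ and cover the identity on the quotient $M(\alpha,\Delta)$. Since $E=\C c_\gamma\oplus\C[\partial]v_\Delta$ as a $\C[\partial]$-module and $\C c_\gamma$ is one-dimensional over $\C$, such a $\psi$ is a $\C[\partial]$-module map determined by $\psi(v_\Delta)=v_\Delta+a_0c_\gamma$ for a single scalar $a_0\in\C$; here $\C[\partial]$-linearity together with $\partial c_\gamma=\gamma c_\gamma$ forces $\psi(\partial^n v_\Delta)=\partial^n v_\Delta+a_0\gamma^n c_\gamma$, so no higher-degree freedom in $\partial$ is available.

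Equivalently, the extension \eqref{w1m} is trivial precisely when the generator can be replaced by $\tilde v_\Delta:=v_\Delta+a_0c_\gamma$ for some $a_0\in\C$ so that the cocycle terms disappear, i.e.\ $L_\lambda\tilde v_\Delta=(\partial+\alpha+\Delta\lambda)\tilde v_\Delta$ and $H_\lambda\tilde v_\Delta=0$. The key step is then a short computation using \eqref{cm1} and the trivial action $L_\lambda c_\gamma=H_\lambda c_\gamma=0$. Applying $L_\lambda$ gives $L_\lambda\tilde v_\Delta=(\partial+\alpha+\Delta\lambda)v_\Delta+f(\lambda)c_\gamma$, while $(\partial+\alpha+\Delta\lambda)\tilde v_\Delta=(\partial+\alpha+\Delta\lambda)v_\Delta+a_0(\partial+\alpha+\Delta\lambda)c_\gamma$; the identity $\partial c_\gamma=\gamma c_\gamma$ collapses the last term to $a_0(\alpha+\gamma+\Delta\lambda)c_\gamma$. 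Matching coefficients of $c_\gamma$ shows the $L$-part splits if and only if $f(\lambda)=a_0(\alpha+\gamma+\Delta\lambda)$, i.e.\ $f(\lambda)$ is a scalar multiple of $\alpha+\gamma+\Delta\lambda$.

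For the $H$-part the computation is even shorter: $H_\lambda\tilde v_\Delta=g(\lambda)c_\gamma+a_0H_\lambda c_\gamma=g(\lambda)c_\gamma$, which vanishes if and only if $g(\lambda)=0$. Combining the two conditions yields exactly the asserted description of the trivial cocycles, and conversely every such pair $(f,g)$ arises from the change of generator above, so these exhaust the trivial extensions.

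There is no serious obstacle here; the argument is essentially bookkeeping once the lift is parametrized correctly. The only point that needs care is confirming that the correction term $a_0c_\gamma$ involves a genuine scalar rather than a polynomial in $\partial$ — this is what limits $f(\lambda)$ to scalar multiples of $\alpha+\gamma+\Delta\lambda$ and forbids any nonzero trivial $g$ — and this follows from $\C c_\gamma$ being one-dimensional over $\C$ together with the $\C[\partial]$-linearity of the equivalence $\psi$.
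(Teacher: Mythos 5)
Your proof is correct and takes essentially the same approach as the paper: both characterize triviality by replacing the generator with $v_\Delta + a\,c_\gamma$ and comparing the two expressions for the $L_\lambda$- and $H_\lambda$-actions, using $\partial c_\gamma=\gamma c_\gamma$ to collapse the correction term to $a(\alpha+\gamma+\Delta\lambda)c_\gamma$. The only cosmetic difference is that the paper writes the split generator as $\varphi(\partial)v_\Delta+a c_\gamma$ with $\varphi$ a priori a polynomial and then deduces $\varphi$ is constant, whereas you invoke the equivalence diagram to force the correction to be a genuine scalar multiple of $c_\gamma$ from the outset.
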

\begin{proof} Suppose that \eqref{w1m} represents a trivial cocycle. This means that the exact sequence
\eqref{w1m} is split
and hence there exists $v_\Delta'=\varphi(\partial)v_\Delta+a c_\gamma\in E$, where $a\in\C$, such that
\begin{eqnarray*}
L_\lambda v_\Delta'=(\partial+\alpha+\Delta\lambda)v_\Delta'
=(\partial+\alpha+\Delta\lambda)\varphi(\partial)v_\Delta+a(\gamma+\alpha+\Delta\lambda)c_\gamma,
\end{eqnarray*}
and $H_\lambda v_\Delta'=0.$ On the other hand, it follows from \eqref{cm1} that
\begin{eqnarray*}
&&L_\lambda v_\Delta'=\varphi(\partial+\lambda)(\partial+\alpha+\Delta\lambda)v_\Delta+\varphi(\partial+\lambda)f(\lambda)c_\gamma,\\
&&H_\lambda v_\Delta'=\varphi(\partial+\lambda)g(\lambda)c_\gamma.
\end{eqnarray*}
Comparing both expressions for $L_\lambda v_\Delta'$ and $H_\lambda v_\Delta'$  respectively, we see that $\varphi(\partial)$ is a constant. Therefore $f(\lambda)$ is a scalar multiple of $\alpha+\gamma+\Delta\lambda$, and $g(\lambda)=0$.
\end{proof}
\begin{theo}\label{theo1}
There are nontrivial extensions of $\mathcal{W}(b)$-modules of the form \eqref{w1m}
if and only if $\alpha+\gamma=0$. Moreover, these nontrivial extensions are given (up to equivalence) by \eqref{cm1}, where, if $g(\lambda)=0$, then $\Delta=1,2$ and $f(\lambda)$ is from the nonzero polynomials of Theorem \ref{th2} for all $0\neq b\in\C$, or else $g(\lambda)=a$ for some $0\neq a\in\C$,  $\Delta=b$ and 
\begin{eqnarray}
f(\lambda)=\left\{
\begin{array}{ll}
c_2\lambda^2, &for\ b=1,\\
c_3\lambda^3, &for \ b=2,\\
0, &otherwise,
\end{array}
\right.
\end{eqnarray}
with $c_2,\, c_3\in\C$.


\end{theo}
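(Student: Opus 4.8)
The plan is to impose on the data $(f,g)$ of \eqref{cm1} exactly the module axioms \eqref{WL}--\eqref{Wb} for $E$ and read off the resulting constraints. Since $\C c_\gamma$ is a submodule annihilated by both $L$ and $H$, the relations \eqref{WH}, \eqref{Wa}, \eqref{Wb} hold automatically, so the only information comes from applying the $L$-$L$ bracket \eqref{WL} and the mixed $L$-$H$ bracket \eqref{WLH} (equivalently \eqref{WHL}) to the generator $v_\Delta$. First I would extract the coefficient of $c_\gamma$ from each bracket identity, the $v_\Delta$-component being satisfied identically because $M(\alpha,\Delta)$ is already a module; this converts the two operator relations into two scalar functional equations for $f$ and $g$.

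Applying \eqref{WL} to $v_\Delta$, collecting the $c_\gamma$-component, and using $\partial c_\gamma=\gamma c_\gamma$, I expect to recover precisely the Virasoro cocycle equation
\begin{eqnarray*}
(\gamma+\alpha+\lambda+\Delta\mu)f(\lambda)-(\gamma+\alpha+\mu+\Delta\lambda)f(\mu)=(\lambda-\mu)f(\lambda+\mu),
\end{eqnarray*}
which is the same equation underlying Theorem \ref{th2}. Thus, with no further work on $f$, the $L$-action alone forces that a nonzero $f$ can occur only when $\alpha+\gamma=0$, in which case $\Delta=1,2$ and $f$ is the corresponding nonzero polynomial $c_2\lambda^2$ or $c_3\lambda^3$ from Theorem \ref{th2}; note the parameter $b$ does not enter here, which is the source of the ``for all $0\neq b\in\C$'' clause.

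Next I would apply \eqref{WLH} to $v_\Delta$. Using $H^M_\mu v_\Delta=g(\mu)c_\gamma$, $L^M_\lambda c_\gamma=H^M_\lambda c_\gamma=0$, and $H^M_\mu\partial=(\partial+\mu)H^M_\mu$, a short computation should reduce the identity to
\begin{eqnarray*}
(\gamma+\alpha+\Delta\lambda+\mu)g(\mu)=(b\lambda+\mu)g(\lambda+\mu).
\end{eqnarray*}
Setting $\lambda=0$ gives $(\alpha+\gamma)g(\mu)=0$, so either $g=0$ or $\alpha+\gamma=0$; and if $g\neq0$, setting $\mu=0$ and using $b\neq0$ forces $g(\lambda)=\Delta g_0/b$ to be the constant $g_0$, and then $\Delta=b$. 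Hence a nonzero $g$ is necessarily a nonzero constant $a$, occurs only when $\alpha+\gamma=0$, and rigidly pins $\Delta=b$.

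Finally I would assemble the two analyses. If $g=0$ the problem collapses to the pure Virasoro case and yields the first alternative of the theorem. If $g=a\neq0$, then $\alpha+\gamma=0$ and $\Delta=b$; substituting $\Delta=b$ into the $f$-equation above (the Virasoro equation now at weight $b$) selects $f=c_2\lambda^2$ when $b=1$, $f=c_3\lambda^3$ when $b=2$, and only the coboundary solution otherwise. By Lemma \ref{lem1} the coboundaries are scalar multiples of $\alpha+\gamma+\Delta\lambda=b\lambda$ (and a generator change alters $f$ by exactly such a term while leaving $g$ fixed), so I may absorb it to set $f=0$ in the remaining cases. Conversely each listed pair $(f,g)$ satisfies both functional equations, giving genuine extensions, and the ``only if $\alpha+\gamma=0$'' direction follows since every nontrivial choice of $f$ or $g$ required $\alpha+\gamma=0$. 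The main obstacle I anticipate is not any single computation but keeping the $f$- and $g$-constraints coordinated: one must observe that a nonzero $g$ forces $\Delta=b$, after which the otherwise $b$-independent Virasoro analysis of $f$ has to be re-run at this specific weight to produce the case split on $b=1,2$.
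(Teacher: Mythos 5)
Your proposal is correct and follows essentially the same route as the paper: applying \eqref{WL} and \eqref{WLH} to $v_\Delta$ yields exactly the two functional equations the paper derives, the specializations $\lambda=0$ and $\mu=0$ force $(\alpha+\gamma)g=0$, $g$ constant, and $\Delta=b$ when $g\neq0$, and the $f$-analysis is delegated to Theorem \ref{th2} with coboundaries handled via Lemma \ref{lem1}. Your explicit remark that a change of generator shifts $f$ by a multiple of $\alpha+\gamma+\Delta\lambda$ while leaving $g$ fixed is a useful clarification of why one may normalize $f=0$ when $\Delta=b\neq1,2$, but it is the same argument the paper makes implicitly.
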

\begin{proof}
Applying both sides of \eqref{WL} and \eqref{WLH} to $v_\Delta$, we obtain
\begin{eqnarray}
&&(\alpha+\gamma+\lambda+\Delta\mu)f(\lambda)-(\alpha+\gamma+\mu+\Delta\lambda)f(\mu)=(\lambda-\mu)f(\lambda+\mu),\label{wl1}\\
&&(\alpha+\gamma+\mu+\Delta\lambda)g(\mu)=(b\la+\mu) g(\lambda+\mu).\label{wlh1}
\end{eqnarray}
Setting $\lambda=0$ in \eqref{wlh1} gives
\begin{eqnarray}
(\alpha+\gamma)g(\mu)=0. \label{s4}
\end{eqnarray}
\vskip5pt
\textbf{Case 1.} $\alpha+\gamma\neq0.$
\vskip5pt

By \eqref{s4}, $g(\mu)=0$. Setting $\mu=0$ in \eqref{wl1}, we see that  $f(\lambda)$ is a scalar multiple of $\alpha+\gamma+\Delta\lambda$. Thus, the extension is trivial by Lemma \ref{lem1}.

\vskip5pt
\textbf{Case 2.} $\alpha+\gamma=0.$
\vskip5pt

Putting $\mu=0$ in \eqref{wlh1}, we obtain $g(\lambda)=\frac\Delta b g(0)$, which gives $g(\lambda)$ is a constant. If $g(\lambda)=a\neq0$, then $\Delta =b$. By Theorem \ref{th2}, $f(\lambda)=c_2\lambda^2$ if $b=1$, $f(\lambda)=c_3\lambda^3$ if $b=2$, and $f(\lambda)=0$ if $b\neq1$ or $2$. In these cases, the corresponding extensions are nontrivial.  If $g(\lambda)=0$, then it  reduces to the Virasoro case. By Theorem \ref{th2}, we obtain the result.
\end{proof}

Next we consider extensions of $\mathcal{W}(b)$}-modules of the form
\begin{eqnarray}\label{w2m}
0\longrightarrow M(\alpha,\Delta)\longrightarrow E \longrightarrow \C c_\gamma \longrightarrow 0.
\end{eqnarray}
As a vector space, $E$ in \eqref{w2m} is isomorphic to $M(\alpha,\Delta)\oplus\C {c_\gamma}$. Here $M(\alpha,\Delta)=\C[\partial]v_\Delta$ is a $\mathcal{W}(b)$}-submodule and we have
\begin{eqnarray}\label{cm2}
L_\lambda c_\gamma=f(\partial,\lambda)v_\Delta,\ \
H_\lambda c_\gamma=g(\partial,\lambda)v_\Delta,\ \
\partial c_\gamma=\gamma c_\gamma+h(\partial)v_\Delta,
\end{eqnarray}
where $f(\partial,\lambda)=\sum_{n\geqslant0}f_n(\partial)\lambda^n,\, g(\partial,\lambda)=\sum_{n\geqslant0}g_n(\partial)\lambda^n,\, f_n(\partial),\, g_n(\partial), \, h(\partial)\in\C[\partial]$.
\begin{lemm}\label{lem2} All trivial extensions of the form \eqref{w2m} are given by \eqref{cm2} with
$f(\partial,\lambda)=(\pa+\alpha+\Delta\lambda)\phi(\partial+\lambda)$, $g(\partial,\lambda)=0$ and $h(\partial)=(\partial-\gamma)\phi(\partial)$, where $\phi$ is a polynomial.
\end{lemm}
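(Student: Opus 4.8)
The plan is to characterize the split (equivalently, coboundary) extensions \eqref{w2m} by exhibiting a generator of a trivial complementary submodule, exactly in the spirit of the proof of Lemma \ref{lem1}. Since $E=M(\alpha,\Delta)\oplus\C c_\gamma$ as a $\C[\partial]$-module, the extension is trivial precisely when the projection $p$ admits a $\mathcal{W}(b)$-module splitting, i.e. when there is an element $c_\gamma'=c_\gamma-\phi(\partial)v_\Delta\in E$, with $\phi\in\C[\partial]$, spanning a $\mathcal{W}(b)$-submodule isomorphic to $\C c_\gamma$. First I would observe that $c_\gamma'$ generates such a copy of the trivial module if and only if the three identities $\partial c_\gamma'=\gamma c_\gamma'$, $L_\lambda c_\gamma'=0$, and $H_\lambda c_\gamma'=0$ all hold.

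The key computational tool is conformal sesquilinearity: from the axiom $[\partial,a^M_\lambda]=-\lambda a^M_\lambda$ one gets $a_\lambda(\phi(\partial)v_\Delta)=\phi(\partial+\lambda)\,a_\lambda v_\Delta$ for $a\in\{L,H\}$, which lets me push $\phi(\partial)$ past the $\lambda$-action. Applying this together with the submodule actions $L_\lambda v_\Delta=(\partial+\alpha+\Delta\lambda)v_\Delta$ and $H_\lambda v_\Delta=0$ (recall $b\neq0$), and the prescribed actions \eqref{cm2} on $c_\gamma$, I would expand each of the three identities in turn. The relation $\partial c_\gamma'=\gamma c_\gamma'$ yields $h(\partial)+\partial\phi(\partial)=\gamma\phi(\partial)$, hence $h(\partial)=(\partial-\gamma)\phi(\partial)$; the relation $L_\lambda c_\gamma'=0$ gives $f(\partial,\lambda)=(\partial+\alpha+\Delta\lambda)\phi(\partial+\lambda)$; and the relation $H_\lambda c_\gamma'=0$, in which the $\phi$-term is annihilated because $H_\lambda v_\Delta=0$, forces $g(\partial,\lambda)=0$. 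These are exactly the asserted forms.

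For the converse I would check that, given $f,g,h$ of these forms, the element $c_\gamma'=c_\gamma-\phi(\partial)v_\Delta$ satisfies the same three identities, so that $\C c_\gamma'$ is a trivial $\mathcal{W}(b)$-submodule complementary to $M(\alpha,\Delta)$ and the sequence \eqref{w2m} splits. I do not expect a genuine obstacle here; the only points requiring care are the correct use of the sesquilinearity shift $\phi(\partial)\mapsto\phi(\partial+\lambda)$ under the $\lambda$-action, and keeping the sign convention in $c_\gamma'=c_\gamma-\phi(\partial)v_\Delta$ consistent so that $h(\partial)=(\partial-\gamma)\phi(\partial)$ emerges with the stated sign. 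It is worth emphasizing that the vanishing $H_\lambda v_\Delta=0$ for $b\neq0$ is precisely what forces $g(\partial,\lambda)=0$, in contrast to the $b=0$ situation treated in \cite{LY}.
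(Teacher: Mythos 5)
Your proposal is correct and follows essentially the same route as the paper: both exhibit a candidate splitting vector (the paper writes $c_\gamma'=a\,c_\gamma+\phi(\partial)v_\Delta$, you normalize $a=1$ with a sign flip), expand $L_\lambda c_\gamma'$, $H_\lambda c_\gamma'$ and $\partial c_\gamma'$ via sesquilinearity, and compare with the requirement that $\C c_\gamma'$ span a trivial submodule, with $H_\lambda v_\Delta=0$ forcing $g=0$. The only blemish is an internal sign slip: with your convention $c_\gamma'=c_\gamma-\phi(\partial)v_\Delta$ the intermediate equation should read $h(\partial)-\partial\phi(\partial)=-\gamma\phi(\partial)$ rather than $h(\partial)+\partial\phi(\partial)=\gamma\phi(\partial)$, although the conclusion $h(\partial)=(\partial-\gamma)\phi(\partial)$ that you state is the correct one for that convention.
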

\begin{proof}  Suppose that \eqref{w2m} represents a trivial cocycle. This means that the exact sequence
\eqref{w2m} is split
and hence there exists $c'_\gamma=a c_\gamma+\phi(\partial)v_\Delta\in E$, where $a\in\C$, such that $L_\lambda c'_\gamma=H_\lambda c'_\gamma=0$ and $\partial c'_\gamma=\gamma c'_\gamma$. On the other hand, a short computation shows that
\begin{eqnarray*}
L_\lambda c'_\gamma&=&(\partial+\alpha+\Delta\lambda)\phi(\partial+\lambda)v_\Delta+a f(\partial,\lambda)v_\Delta,\\
H_\lambda c'_\gamma&=& a g(\partial,\lambda)v_\Delta,\\
\partial c'_\gamma&=& a\gamma c_\gamma+\big(  a h(\partial)+\partial\phi(\partial)\big)v_\Delta.
\end{eqnarray*}
Comparing both expressions for $L_\lambda c'_\gamma$, $H_\lambda c'_\gamma$ and $\partial c'_\gamma$ respectively, we obtain the result.
\end{proof}

\begin{theo}\label{theo2}
There are nontrivial extensions of $\mathcal{W}(b)$-modules of the form \eqref{w2m} if and only if $\alpha+\gamma=0$ and $\Delta=1$. In this case, ${\rm dim}_\C\big(\C{c_{-\alpha}},{{\rm Ext}} (M(\alpha,1))\big)=1,$  and the unique (up to a scalar) nontrivial extension is given by
\begin{eqnarray*}
L_\lambda c_\gamma=a v_\Delta,\ \
H_\lambda c_\gamma=0,\ \
\partial c_\gamma=\gamma c_\gamma+a v_\Delta,
\end{eqnarray*}
where $a$ is a nonzero complex number.
\end{theo}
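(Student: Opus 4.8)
The plan is to derive the cocycle equations by applying the defining relations \eqref{WL}, \eqref{WLH}, \eqref{WHL}, \eqref{WH}, \eqref{Wa} and \eqref{Wb} to the generator $c_\gamma$ and reading off constraints on the unknown polynomials $f(\partial,\lambda)$, $g(\partial,\lambda)$ and $h(\partial)$ appearing in \eqref{cm2}. Since $M(\alpha,\Delta)=\C[\pa]v_\Delta$ is already a submodule and $\C c_\gamma$ is one-dimensional, the only freedom is in how $L$, $H$ and $\partial$ act on $c_\gamma$, producing values in $M(\alpha,\Delta)$. I would first exploit the two compatibility relations coming from $[\partial,L^M_\lambda]=-\lambda L^M_\lambda$ and $[\partial,H^M_\lambda]=-\lambda H^M_\lambda$: applying these to $c_\gamma$ and using $\partial c_\gamma=\gamma c_\gamma+h(\partial)v_\Delta$ gives functional equations relating $f(\partial,\lambda)$ to $h(\partial)$ (and forcing a consistency condition on $g$), which pin down the $\partial$-dependence and effectively reduce the number of free parameters.

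Next I would apply \eqref{WL} in the form $[L^E_\lambda,L^E_\mu]c_\gamma=(\lambda-\mu)L^E_{\lambda+\mu}c_\gamma$. Because $L$ acting on $v_\Delta$ is the Virasoro action $(\partial+\alpha+\Delta\lambda)v_\Delta$, this reproduces exactly the Virasoro cocycle equation for an extension of type \eqref{type2}, so I can invoke Theorem \ref{th3}: nontrivial $L$-cocycles force $\alpha+\gamma=0$ and $\Delta=1$, with $f(\partial,\lambda)=h(\partial)=a_0$ constant. The genuinely new ingredient is the $H$-action, governed by $g(\partial,\lambda)$ through relations \eqref{WLH}/\eqref{WHL} and \eqref{WH}. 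Applying \eqref{WHL} (equivalently \eqref{WLH}) to $c_\gamma$ yields a relation of the shape $(\pa+\alpha+\Delta\lambda-\text{shift})g(\cdot)=(b\lambda+\mu)$-type factors $\times\, g(\cdot)$, and I expect that, after setting one of the variables to zero and using the $\partial$-compatibility already derived, this collapses to forcing $g=0$ entirely when $b\neq 0$. The point is that the $H_\lambda H_\mu=0$ relation \eqref{WH} together with the mismatch between the weight $\Delta\neq 0$ of $v_\Delta$ and the trivial $H$-action on $c_\gamma$ leaves no room for a nonzero $g$.

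The main obstacle, and the step deserving the most care, is showing $g(\partial,\lambda)=0$ cleanly for all $b\neq 0$: one must check that no special value of $b$ (analogous to the resonances $b=1,2$ that appeared in Theorem \ref{theo1}) allows a nonzero $H$-component here. I would handle this by a degree and lowest-term analysis of the functional equation for $g$ obtained from \eqref{WHL}, substituting $\mu=0$ and $\lambda=0$ to extract the leading constraints, and then arguing inductively on the degree in $\lambda$ that every coefficient $g_n(\partial)$ vanishes. Once $g\equiv 0$, the problem decouples completely into the Virasoro subproblem, and the conclusion follows from Theorem \ref{th3}, giving $\alpha+\gamma=0$, $\Delta=1$, and the one-parameter family $L_\lambda c_\gamma=a v_\Delta$, $H_\lambda c_\gamma=0$, $\partial c_\gamma=\gamma c_\gamma+a v_\Delta$. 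Finally I would quote Lemma \ref{lem2} to confirm that this cocycle is not a coboundary (the trivial cocycles have $f(\partial,\lambda)=(\pa+\alpha+\Delta\lambda)\phi(\partial+\lambda)$ with $h(\partial)=(\partial-\gamma)\phi(\partial)$, which cannot produce matching nonzero constants unless $\phi=0$), thereby establishing $\dim_\C\mathrm{Ext}=1$.
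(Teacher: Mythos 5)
Your proposal is correct and its overall skeleton matches the paper's proof: write the unknown actions as in \eqref{cm2}, extract functional equations from the defining relations applied to $c_\gamma$, kill $g$, reduce to the Virasoro problem, and use Lemma \ref{lem2} to certify nontriviality. The one substantive difference is where you locate the difficulty. You flag the vanishing of $g(\partial,\lambda)$ as ``the main obstacle,'' propose to attack it through \eqref{WHL} with a degree/lowest-term induction, and worry about resonant values of $b$; in fact this step is a one-line argument, and your own first paragraph already contains it. The paper applies \eqref{Wb} to $c_\gamma$ (your ``consistency condition on $g$''), which gives $(\partial+\lambda-\gamma)g(\partial,\lambda)=0$ and hence $g=0$ immediately, since $\C[\partial,\lambda]$ is an integral domain. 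Your alternative route through \eqref{WHL} works just as instantly: applying it to $c_\gamma$ gives $-(\partial+\alpha+\Delta\mu)g(\partial+\mu,\lambda)=(\lambda+b\mu)g(\partial,\lambda+\mu)$, and setting $\mu=0$ yields $(\partial+\alpha+\lambda)g(\partial,\lambda)=0$, so $g=0$ with no induction and with $b$ disappearing entirely --- there are no resonances to check, in contrast to Theorem \ref{theo1} where the analogue of $g$ lives on the quotient rather than mapping into the submodule. After $g=0$, you quote Theorem \ref{th3} for the Virasoro extension of type \eqref{type2}, whereas the paper re-derives that classification by homogenizing its equations; your shortcut is legitimate (and cleaner), provided you note the small point that when $g=0$ a Virasoro splitting $c'_\gamma=c_\gamma+\phi(\partial)v_\Delta$ automatically satisfies $H_\lambda c'_\gamma=0$, so triviality as a $\Vir$-extension and as a $\mathcal{W}(b)$-extension coincide. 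Your final coboundary check against Lemma \ref{lem2} (a nonzero constant $f$ cannot equal $(\partial+\alpha+\Delta\lambda)\phi(\partial+\lambda)$ for polynomial $\phi$) is exactly the paper's conclusion and gives ${\rm dim}\,{\rm Ext}=1$.
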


\begin{proof}

Applying both sides of \eqref{WL}, \eqref{Wa} and \eqref{Wb} to $c_\gamma$ gives the following functional equations:
\begin{eqnarray}
&&(\partial +\alpha +\Delta \lambda )f(\partial +\lambda ,\mu )-(\partial +\alpha +\Delta \mu )f(\alpha +\mu ,\lambda )=(\lambda -\mu )f(\partial ,\lambda +\mu ),\label{s5}\\
&&(\partial +\lambda-\gamma)f(\partial ,\lambda )=(\partial +\alpha +\Delta \lambda )h(\partial +\lambda ),\label{s6}\\
&&(\partial +\lambda-\gamma)g(\partial ,\lambda )=0.\label{s7}
\end{eqnarray}
Obviously, $g(\partial ,\lambda )=0$ by \eqref{s7}. Replacing $\partial$ by $\bar\partial=\partial+\alpha$ and letting $\bar f(\bar\partial, \lambda) = f(\bar\partial-\alpha, \lambda)$, and $\bar h(\bar\partial) = h(\bar\partial-\alpha)$, we can rewrite
\eqref{s5} and \eqref{s6} in a homogeneous form
\begin{eqnarray}
&&(\bar\partial+\Delta\lambda)\bar f(\bar\partial+\lambda ,\mu )-(\bar\partial +\Delta \mu )\bar f(\bar\partial+\mu ,\lambda )=(\lambda -\mu )\bar f(\bar \partial ,\lambda +\mu ),\label{s9}\\
&&(\bar\partial-\alpha +\lambda-\gamma)\bar f(\bar \partial ,\lambda )=(\bar\partial +\Delta \lambda )\bar h(\bar\partial +\lambda ).\label{s10}
\end{eqnarray}
Taking $\mu=0$ in \eqref{s9}, we can obtain that, if degree of $\bar f$ is positive, $\bar f(\bar\partial, \lambda)$ is a scalar multiple of $(\bar\partial+\Delta\lambda)\bar f(\bar \partial+\lambda)$, where $\bar f(\bar\partial+\lambda)$ is a polynomial in $\bar \partial+\lambda$, or else $\bar f(\bar\partial,\lambda)=a\in\C.$

Assume that $\bar f(\bar\partial, \lambda)$ is a scalar multiple of $(\bar\partial+\Delta\lambda)\bar f(\bar \partial+\lambda)$. Letting $\lambda=0$ in  \eqref{s10}, we have $\bar h(\bar\partial)$ is a scalar multiple of $(\bar\partial-\alpha-\gamma)\bar f(\bar\partial)$. However, they correspond to the trivial extension by Lemma \ref{lem2}.

It is left to consider the case $\bar f(\bar \partial,\lambda)=a$. Substituting this into \eqref{s10} gives $\bar h(\bar \partial)=\bar f(\bar \partial,\lambda)=a$, and in particular, $\alpha+\gamma=0,$ $\Delta=1$ if $a\neq 0$. The proof is finished.
\end{proof}

Finally, we study extensions of conformal $\mathcal{W}(b)$-modules of the form
\begin{eqnarray}\label{3m}
0\longrightarrow M(\bar\alpha,\bar\Delta)\longrightarrow E \longrightarrow M(\alpha,\Delta) \longrightarrow 0,
\end{eqnarray}
where $\Delta, \bar\Delta \neq 0$. As a $\C[\partial]$-module, $E\cong\C[\partial]v_{\bar\Delta}\oplus\C[\partial]v_\Delta$. The following identities hold in $E$
\begin{eqnarray}\label{3m*}
L_\lambda v_\Delta=(\partial+\alpha+\Delta\lambda)v_\Delta+f(\partial,\lambda)v_{\bar\Delta },\ \
H_\lambda v_\Delta=g(\partial ,\lambda )v_{\bar\Delta},
\end{eqnarray}
where $f(\partial,\lambda)=\sum_{n\geqslant0}f_n(\partial)\lambda^n,\ g(\partial,\lambda)=\sum_{n\geqslant0}g_n(\partial)\lambda^n, f_n(\partial),g_n(\partial)\in\C[\partial]$ and $f_n(\partial)=g_n(\partial)=0$ for $n\gg0$.

\begin{lemm}\label{lem4} All trivial extensions of the form \eqref{3m} are given by \eqref{3m*}, where
$f(\partial,\lambda)$ is a scalar multiple of $(\partial+\alpha+\Delta\lambda)\phi(\partial)-(\partial+\bar\alpha+\bar\Delta\lambda)\phi(\partial+\lambda)$ and $g(\partial,\lambda)=0$, where $\phi$ is a polynomial.
\end{lemm}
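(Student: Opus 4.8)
The plan is to follow the same splitting argument used for Lemmas \ref{lem1} and \ref{lem2}. Suppose that \eqref{3m} represents a trivial cocycle, so that the exact sequence splits. Then there exists an element $v_\Delta' = \varphi(\partial)v_\Delta + \phi(\partial)v_{\bar\Delta} \in E$ generating a $\mathcal{W}(b)$-submodule isomorphic to $M(\alpha,\Delta)$, that is, satisfying $L_\lambda v_\Delta' = (\partial+\alpha+\Delta\lambda)v_\Delta'$ and $H_\lambda v_\Delta' = 0$. Here $\varphi,\phi \in \C[\partial]$, and necessarily $\varphi \neq 0$, for otherwise $v_\Delta'$ would lie in the submodule $M(\bar\alpha,\bar\Delta)$ and could not generate a complement.

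First I would extract the condition on $g$. Using the $\C[\partial]$-module structure together with \eqref{3m*} and the fact that $H_\lambda v_{\bar\Delta} = 0$ in the submodule $M(\bar\alpha,\bar\Delta)$, one computes $H_\lambda v_\Delta' = \varphi(\partial+\lambda)g(\partial,\lambda)v_{\bar\Delta}$. Setting this equal to zero and using $\varphi \neq 0$ forces $g(\partial,\lambda) = 0$.

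Next I would expand $L_\lambda v_\Delta'$ in the basis $\{v_\Delta, v_{\bar\Delta}\}$. From \eqref{3m*} and $L_\lambda v_{\bar\Delta} = (\partial+\bar\alpha+\bar\Delta\lambda)v_{\bar\Delta}$, the coefficient of $v_\Delta$ is $\varphi(\partial+\lambda)(\partial+\alpha+\Delta\lambda)$, while the coefficient of $v_{\bar\Delta}$ is $\varphi(\partial+\lambda)f(\partial,\lambda)+\phi(\partial+\lambda)(\partial+\bar\alpha+\bar\Delta\lambda)$. Comparing with $(\partial+\alpha+\Delta\lambda)v_\Delta' = (\partial+\alpha+\Delta\lambda)\varphi(\partial)v_\Delta + (\partial+\alpha+\Delta\lambda)\phi(\partial)v_{\bar\Delta}$, the $v_\Delta$-coefficients give $\varphi(\partial+\lambda)(\partial+\alpha+\Delta\lambda) = (\partial+\alpha+\Delta\lambda)\varphi(\partial)$, which forces $\varphi(\partial+\lambda)=\varphi(\partial)$ and hence $\varphi$ to be a nonzero constant.

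Finally, the $v_{\bar\Delta}$-coefficients yield $\varphi\,f(\partial,\lambda) = (\partial+\alpha+\Delta\lambda)\phi(\partial) - (\partial+\bar\alpha+\bar\Delta\lambda)\phi(\partial+\lambda)$, so that $f(\partial,\lambda)$ is a scalar multiple of $(\partial+\alpha+\Delta\lambda)\phi(\partial) - (\partial+\bar\alpha+\bar\Delta\lambda)\phi(\partial+\lambda)$, as claimed. I do not expect a genuine obstacle here: the argument is a routine splitting computation, and the only point requiring care is observing that $\varphi$ is forced to be a nonzero constant, which both legitimizes dividing by $\varphi$ to conclude $g=0$ and puts $f$ in the stated form. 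Conversely, one checks directly that any $f$ of this shape together with $g=0$ arises from such a splitting, and hence gives a trivial extension.
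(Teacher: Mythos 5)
Your proof is correct and follows essentially the same splitting argument as the paper: expand $L_\lambda v_\Delta'$ and $H_\lambda v_\Delta'$ for a hypothetical complement generator $v_\Delta'=\varphi(\partial)v_\Delta+\phi(\partial)v_{\bar\Delta}$, compare coefficients of $v_\Delta$ and $v_{\bar\Delta}$, conclude $\varphi$ is a nonzero constant, and read off the forms of $f$ and $g$. The only (harmless) differences are cosmetic: you justify $\varphi\neq 0$ up front and deduce $g=0$ before establishing that $\varphi$ is constant, whereas the paper does this at the end, and you additionally note the converse verification, which the paper leaves implicit.
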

\begin{proof} Suppose that \eqref{3m} represents a trivial cocycle. This means that the exact sequence
\eqref{3m} is split
and hence there exists $v_\Delta'=\psi (\partial)v_\Delta+\phi(\partial)v_{\bar\Delta}\in E$, such that
\begin{eqnarray*}
L_\lambda v_\Delta'=(\partial+\alpha+\Delta\lambda)v_\Delta'
=(\partial+\alpha+\Delta\lambda)\big(\psi(\partial)v_\Delta+\phi(\partial)v_{\bar\Delta}\big),
\end{eqnarray*}
and $H_\lambda v'_\Delta=0.$ On the other hand, a short computation shows that
\begin{eqnarray*}
L_\lambda v_\Delta'&=&L_\lambda (\psi (\partial)v_\Delta+\phi(\partial)v_{\bar\Delta})\\
&=&\psi(\partial+\lambda)L_\lambda v_\Delta+\phi(\partial+\lambda)L_\lambda v_{\bar\Delta}\\
&=&(\partial+\alpha+\Delta\lambda)\psi(\partial+\lambda)v_\Delta+\big(\psi(\partial+\lambda)f(\partial,\lambda)+\phi(\partial+\lambda)(\partial+\bar\alpha+\bar\Delta\lambda)\big)v_{\bar\Delta}
\end{eqnarray*}
Comparing both expressions for $L_\lambda v_\Delta'$ gives
\begin{eqnarray*}
&&(\partial+\alpha+\Delta\lambda)\psi(\partial)=(\partial+\alpha+\Delta\lambda)\psi(\partial+\lambda),\\
&&(\partial+\alpha+\Delta\lambda)
\phi(\partial)=f(\partial,\lambda)\psi(\partial+\lambda)+(\partial+\bar\alpha+\bar\Delta\lambda)\phi(\partial+\lambda),
\end{eqnarray*}
which imply that $\psi(\partial)=a$ with $a$ being a nonzero complex number  and hence $f(\partial,\lambda)$ is a scalar multiple of $(\partial+\alpha+\Delta\lambda)\phi(\partial)-(\partial+\bar\alpha+\bar\Delta\lambda)\phi(\partial+\lambda)$.
Similarly, we have
\begin{eqnarray*}
H_\lambda v_\Delta'=a g(\partial,\lambda)v_{\bar\Delta}=0,
\end{eqnarray*}
leading to $g(\partial,\lambda)=0$.
\end{proof}

Applying both sides of \eqref{WL} and \eqref{WLH} to $v_\Delta$ gives the following functional equations:

\begin{eqnarray}
(\lambda-\mu)f(\partial,\lambda+\mu)&=&(\partial+\lambda+\Delta\mu+\alpha)f(\partial,\lambda)+(\partial+\bar\Delta\lambda+\bar\alpha)f(\partial+\lambda,\mu)\nonumber \\
&&-(\partial+\mu+\Delta\lambda+\alpha)f(\partial,\mu)-(\partial+\bar\Delta\mu+\bar\alpha)f(\partial+\mu,\lambda),\label{wl}\\
-(b\lambda+\mu)g(\partial,\lambda+\mu)&=&(\partial+\bar\Delta\lambda+\bar\alpha)g(\partial+\lambda,\mu)-(\partial+\mu+\Delta\lambda+\alpha)g(\partial,\mu).\label{wlh}
\end{eqnarray}
Putting $\lambda=0$ in \eqref{wl}and \eqref{wlh} gives that
\begin{eqnarray}\label{14}
(\alpha-\bar\alpha)f(\partial,\mu)&=&(\partial+\Delta\mu+\alpha)f(\partial,0)-(\partial+\bar\Delta\mu+\bar\alpha)f(\partial+\mu,0),\label{3l1}\\
(\alpha-\bar\alpha)g(\partial,\mu)&=&0.\label{3lh1}
\end{eqnarray}

\textbf{Case 1.} $\alpha\neq\bar\alpha$.
\vskip5pt

By \eqref{3l1} and \eqref{3lh1}, we have $f(\partial,\mu)=\frac1{\alpha-\bar\alpha}\big((\partial+\Delta\mu+\alpha)f(\partial,0)-(\partial+\bar\Delta\mu+\bar\alpha)f(\partial+\mu,0)\big)$ and $g(\partial,\mu)=0$. This corresponds to the trivial extension by Lemma \ref{lem4}.

\vskip5pt
\textbf{Case 2.} $\alpha=\bar\alpha$.
\vskip5pt

For convenience, we put $\bar\partial=\partial+\alpha$ and let $\bar f(\bar\partial,\lambda)=f(\bar\partial-\alpha,\lambda)$, $\bar g(\bar\partial,\lambda)=g(\bar\partial-\alpha,\lambda)$ .
 In what follows we will continue to write $\partial$ for $\bar\partial$, $f$ for $\bar f$ and $g$ for $\bar g$.  Now we can rewrite \eqref{wl} and \eqref{wlh} as follows:
\begin{eqnarray}
(\lambda-\mu)f(\partial,\lambda+\mu)&=&(\partial+\lambda+\Delta\mu)f(\partial,\lambda)+(\partial+\bar\Delta\lambda)f(\partial+\lambda,\mu)\nonumber \\
&&-(\partial+\mu+\Delta\lambda)f(\partial,\mu)-(\partial+\bar\Delta\mu)f(\partial+\mu,\lambda),\label{svl1}\\
-(b\lambda+\mu)g(\partial,\lambda+\mu)&=&(\partial+\bar\Delta\lambda)g(\partial+\lambda,\mu)-(\partial+\mu+\Delta\lambda)g(\partial,\mu).\label{svlm1}
\end{eqnarray}
Setting $\mu=0$ in \eqref{svlm1} gives
\begin{eqnarray}\label{yy}
b\lambda g(\partial,\lambda)=(\partial+\Delta\lambda)g(\partial,0)-(\partial+\bar\Delta\lambda)g(\partial+\lambda,0).
\end{eqnarray}
By the nature of \eqref{svlm1}, we may assume that a solution is a homogeneous polynomial in $\partial$ and $\lambda$ of degree $m$. Hence we will assume from now on that $g(\partial,\lambda)=\sum^{m}_{i=0}a_i\partial^{m-i}\lambda^i$, $a_i\in\C$. Substituting this into \eqref{yy} gives
\begin{eqnarray}\label{coeff}
\sum^{m}_{i=0}ba_i\partial^{m-i}\lambda^{i+1}=(\partial+\Delta\lambda)a_0\partial^m-(\partial+\bar\Delta\lambda)a_0(\partial+\lambda)^m.
\end{eqnarray}
We see that $a_i=0$ for $i=0, 1,\cdots,m$ if $a_0=0$. Then $g(\partial,\lambda)=0$, and it reduces to  the Virasoro case.

Now we assume that $a_0\neq0$. Comparing the coefficients of $\partial^{m}\lambda$ in \eqref{coeff} gives
\begin{eqnarray}\label{no.1}
\Delta-\bar\Delta=m+b.
\end{eqnarray}
Equating the coefficients of $\partial^{m-i}\lambda^{i+1}$ in both sides of \eqref{coeff}, we have
\begin{eqnarray}\label{coe}
ba_i=-a_0\binom{m}{i+1}-a_0\bar\Delta\binom{m}{i},\ \ 1\leqslant i \leqslant m.
\end{eqnarray}
Plugging $g(\partial,\lambda)=\sum^{m}_{i=0}a_i\partial^{m-i}\lambda^i$ into \eqref{svlm1} gives
\begin{eqnarray}\label{coeff1}
-(b\lambda+\mu)\sum^{m}_{i=0}a_i\partial^{m-i}(\lambda+\mu)^i=(\partial+\bar\Delta\lambda)\sum^{m}_{i=0}a_i(\partial+\lambda)^{m-i}\mu^i-(\partial+\mu+\Delta\lambda)\sum^{m}_{i=0}a_i\partial^{m-i}\mu^i.
\end{eqnarray}
Comparing the coefficients of $\lambda^m\mu$ in \eqref{coeff1} with $m\geqslant 2$ gives
$(-bm-1)a_m=\bar\Delta a_1$ and hence by \eqref{coe},
\begin{eqnarray}\label{coee1}
(-bm-1)\bar\Delta=\big(\mbox{$\binom{m}{2}$}+m\bar\Delta\big)\bar\Delta.
\end{eqnarray}
Since $\bar\Delta\neq0$,
\begin{eqnarray}\label{coe1}
-bm-1=\mbox{$\binom{m}{2}$}+m\bar\Delta.
\end{eqnarray}
Similarly, collecting the coefficients of $\partial\lambda^{m-1}\mu$ in \eqref{coeff1} with $m\geqslant3$ and using \eqref{coe} again, we have
\begin{eqnarray}\label{coe2}
\big(-b (m-1)-1\Big)(1+m\bar\Delta)=\big(1+(m-1)\bar\Delta\big)\Big(\mbox{$\binom{m}{2}$}+m\bar\Delta\Big).
\end{eqnarray}
Combining \eqref{coe1} with \eqref{coe2} gives $\bar\Delta=b.$  Plugging this into \eqref{coe1}, we obtain
\begin{eqnarray}\label{coe1-1}
\mbox{$\binom{m}{2}$}+2bm+1=0.
\end{eqnarray}
Assume that $m\geqslant4$. In this case the coefficients of $\partial^{m-2}\lambda^2\mu$ in \eqref{coeff1} gives [using $\bar\Delta=b$ and \eqref{coe} again]
\begin{eqnarray}\label{coe4}
(-2b-1)\Big(\mbox{$\binom{m}{3}$}+\mbox{$\binom{m}{2}$}b\Big)=\Big(\mbox{$\binom{m-1}{2}$}+(m-1)b\Big)\Big(\mbox{$\binom{m}{2}$}+bm\Big).
\end{eqnarray}
Combining \eqref{coe1-1} with \eqref{coe4}, we have $m=-2, -1, 2$ or $3$, a contradiction.
Therefore, $m$ is at most $3$, i.e., $ m=0,1,2,3$.

For $m=0$, we have $\Delta-\bar\Delta=b$ and $g(\partial,\lambda)=a_0 $ is a solution to \eqref{svlm1}.

For $m=1$, we have $\Delta-\bar\Delta=1+b$. And it follows from \eqref{coe} that $g(\partial,\lambda)=a_0(\partial-\frac1b\bar\Delta\lambda)$, which is easily checked to be a solution to
\eqref{svlm1}.

Now assume that $m=2$. By \eqref{no.1} and \eqref{coe1}, we have  $\bar\Delta=-1-b$ and $\Delta=1$. By
\eqref{coe}, we may assume that    $g(\partial,\lambda)=\partial^2-(\frac1b+\frac2b\bar\Delta)\pa\la-\frac1b\bar\Delta\lambda^2$.  Plugging this back into \eqref{svlm1}, we obtain
\begin{eqnarray*}
&&-(b\lambda+\mu)\big(\partial^2-(\frac1b+\frac2b\bar\Delta)\pa(\la+\mu)-\frac1b\bar\Delta(\lambda+\mu)^2\big)\\
&&\ \ \ \ \ \ \ \ \ \, =(\partial+\bar\Delta\lambda)\big((\partial+\lambda)^2-(\frac1b+\frac2b\bar\Delta)(\pa+\la)\mu-\frac1b\bar\Delta\mu^2\big)\\
&&\ \ \ \ \ \ \ \ \ \ \ \ \ \ \, -(\partial+\mu+\Delta\lambda)\big(\partial^2-(\frac1b+\frac2b\bar\Delta)\pa\mu-\frac1b\bar\Delta\mu^2\big),
\end{eqnarray*}
which holds provided that $\bar\Delta=-1-b$ and $\Delta=1$.

Finally we consider the case $m=3$.  By \eqref{coe1-1} and \eqref{no.1}, we have $\bar\Delta=b=-\frac23$ and $\Delta=\frac53.$ By
\eqref{coe}, we may assume that  $g(\partial,\lambda)=\partial^3+\frac32\pa^2\la-\frac32\pa\lambda^2-\lambda^3$.   Plugging this back into \eqref{svlm1}, we obtain that it is a solution.

The above discussions prove the following:
\begin{lemm}\label{lemm4-4}
Let $g(\partial,\lambda)$ be a nonzero homogeneous polynomial of degree $m$ satisfying \eqref{svlm1}.
Then $\Delta-\bar\Delta=m+b$ and $m\leqslant 3$. Furthermore, we have
\begin{itemize}
\item[{\rm (1)}]
For $b=-\frac 23$, all the solutions (up to a scalar) to \eqref{svlm1} are given by
\begin{itemize}
\item[{\rm (i)}] $m=0$, $\Delta-\bar\Delta=-\frac23$, and $g(\partial,\lambda)=1$;
\item [{\rm (ii)}] $m=1$, $\Delta-\bar\Delta=\frac13 $, and $g(\partial,\lambda)=\partial+\frac32\bar\Delta\lambda$;
\item [{\rm (iii)}] $m=2$, $\Delta=1$, $\bar\Delta=-\frac13$, and $g(\partial,\lambda)=\partial^2+\frac12\pa\la-\frac12\lambda^2$;
\item [{\rm (iv)}] $m=3$, $\Delta=\frac53$, $\bar\Delta=-\frac23$, and $g(\partial,\lambda)=\partial^3+\frac32\pa^2\la-\frac32\pa\lambda^2-\lambda^3$,
\end{itemize}

\item[{\rm (2)}] For $b\neq-\frac 23$, all the solutions (up to a scalar) to \eqref{svlm1} are given by
\begin{itemize}
\item[{\rm (i)}] $m=0$, $\Delta-\bar\Delta=b$, and $g(\partial,\lambda)=1$;
\item [{\rm (ii)}]$m=1$, $\Delta-\bar\Delta=1+b $, and $g(\partial,\lambda)=\partial-\frac1b\bar\Delta\lambda;$
\item [{\rm (iii)}]$m=2$, $\Delta=1$, $\bar\Delta=-1-b$ and $g(\partial,\lambda)=\partial^2-(\frac1b+\frac2b\bar\Delta)\pa\la-\frac1b\bar\Delta\lambda^2$.
\end{itemize}
\end{itemize}
\end{lemm}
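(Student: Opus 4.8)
The plan is to regard \eqref{svlm1} as a functional equation in $\partial$ and $\lambda$, with $\mu$ an auxiliary variable, and to solve it for the homogeneous solution $g(\partial,\lambda)=\sum_{i=0}^{m}a_i\partial^{m-i}\lambda^i$ of degree $m$ prescribed in the hypothesis. The argument naturally breaks into three stages: first extract a degree relation and a coefficient recursion from a degenerate specialization; then use the full equation to bound $m$; finally solve the finitely many surviving cases explicitly.

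First I would set $\mu=0$ in \eqref{svlm1} to obtain \eqref{yy}, which expresses $b\lambda\,g(\partial,\lambda)$ entirely in terms of the ``top-row'' data $g(\partial,0)=a_0\partial^{m}$ and $g(\partial+\lambda,0)=a_0(\partial+\lambda)^{m}$. Matching the coefficient of $\partial^{m}\lambda$ forces the degree relation \eqref{no.1}, namely $\Delta-\bar\Delta=m+b$, while matching the remaining coefficients of $\partial^{m-i}\lambda^{i+1}$ yields the recursion \eqref{coe}, which writes every $a_i$ as an explicit multiple of $a_0$. In particular, if $a_0=0$ then all $a_i$ vanish and $g\equiv 0$; hence for a nonzero solution $a_0\neq 0$, and $g$ is completely determined by the single pair $(m,\bar\Delta)$, subject so far only to \eqref{no.1}.

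The substantive step is to feed this now-explicit $g$ back into the \emph{full} equation \eqref{svlm1} and read off further coefficient identities invisible to the specialization $\mu=0$. Comparing the coefficients of $\lambda^{m}\mu$ and of $\partial\lambda^{m-1}\mu$ produces \eqref{coe1} and \eqref{coe2}; since $\bar\Delta\neq 0$ these combine to force $\bar\Delta=b$, whereupon \eqref{coe1} collapses to the single constraint \eqref{coe1-1}, $\binom{m}{2}+2bm+1=0$. The crux is to manufacture one more independent relation that closes off the infinite family: for $m\geqslant 4$ the coefficient of $\partial^{m-2}\lambda^{2}\mu$ gives \eqref{coe4}, and eliminating $b$ between \eqref{coe1-1} and \eqref{coe4} leaves a polynomial equation in $m$ whose only roots are $-2,-1,2,3$, contradicting $m\geqslant 4$; hence $m\leqslant 3$. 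I expect this degree bound to be the main obstacle, because it is not enough to inspect the two or three extreme coefficients — one must check that the $m\geqslant 4$ comparison is genuinely independent of the earlier relations, and the binomial bookkeeping needed to verify this is the delicate part of the argument.

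It then remains to dispatch $m=0,1,2,3$ individually. For each such $m$, the recursion \eqref{coe} together with \eqref{no.1} (and \eqref{coe1} once $m\geqslant 2$) pins down $\Delta$, $\bar\Delta$, and the explicit polynomial $g$, and one verifies by direct substitution that each candidate satisfies \eqref{svlm1}. The final organizational point is that \eqref{coe1-1} admits the top degree $m=3$ exactly when $b=-\tfrac23$ (giving $\bar\Delta=b=-\tfrac23$, $\Delta=\tfrac53$), whereas for $b\neq-\tfrac23$ only $m=0,1,2$ survive; splitting the statement along this dichotomy yields the two lists $(1)$ and $(2)$.
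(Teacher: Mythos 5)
Your proposal is correct and follows essentially the same route as the paper's own proof: specialize $\mu=0$ to obtain $\Delta-\bar\Delta=m+b$ and the recursion \eqref{coe} for the $a_i$, feed the explicit $g$ back into \eqref{svlm1}, use the coefficients of $\lambda^m\mu$, $\partial\lambda^{m-1}\mu$ and (for $m\geqslant4$) $\partial^{m-2}\lambda^2\mu$ to force $m\leqslant3$, then settle $m=0,1,2,3$ case by case. The only point to make explicit in a write-up is that \eqref{coe1} is valid only for $m\geqslant2$ and \eqref{coe2} only for $m\geqslant3$, so the conclusion $\bar\Delta=b$ and hence \eqref{coe1-1} apply only for $m\geqslant3$; your case analysis (which determines $\bar\Delta=-1-b$ for $m=2$ from \eqref{coe1} alone) implicitly respects this, but the middle paragraph as phrased suggests $\bar\Delta=b$ holds generally.
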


By Lemma \ref{lemm4-4} and Theorem \ref{th4}, we obtain the following.
\begin{theo}\label{theo3}
Nontrivial extensions of $\mathcal{W}(b)$-modules of the form \eqref{3m} exist only if $\alpha=\bar\alpha$. For each $\alpha\in\C,$ these extensions are given, up to equivalence, by
\begin{eqnarray*}
L_\lambda v_\Delta=(\partial+\alpha+\Delta\lambda)v_\Delta+f(\partial,\lambda)v_{\bar\Delta },\ \
H_\lambda v_\Delta=g(\partial ,\lambda)v_{\bar\Delta},
\end{eqnarray*}
where $g(\partial ,\lambda)=0$ and $f(\partial,\lambda)$ is from the nonzero polynomials of Theorem \ref{th4} with $\Delta,\bar\Delta \neq 0$, or the values of $\Delta$ and $\bar\Delta$ along with the
pairs of polynomials $g(\partial,\lambda)$ and $f(\partial,\lambda)$, whose nonzero scalar multiples give rise to nontrivial extensions,
are listed as follows  (by replacing $\partial$ by $\partial+\alpha$):
\begin{itemize}
\item[{\rm (1)}] When $b=-1$, we have $\Delta-\bar\Delta=-1$ or $0$. And
\begin{itemize}
\item[{\rm (i)}] In the case $\Delta-\bar\Delta=-1$, $f(\pa,\la)=0$ and $g(\pa,\la)=a_0$, where $a_0\neq0$.
\item[{\rm (ii)}] In the case $\Delta-\bar\Delta=0$, $f(\pa,\la)=c_0+c_1\la$, and $g(\pa,\la)=\partial+\bar\Delta\lambda$, where $c_0, c_1\in\C.$
\end{itemize}

\item[{\rm (2)}] When  $b=1$, we have $\Delta-\bar\Delta=1$, $2$ or $\Delta=1,\bar\Delta=-2$. And
\begin{itemize}
\item[{\rm (i)}] In the case $\Delta-\bar\Delta=1$, $f(\pa,\la)=0$ and $g(\pa,\la)=a_0$, where $a_0\neq0$.
\item[{\rm (ii)}] In the case $\Delta-\bar\Delta=2$, $f(\pa,\la)=c_0\la^2(2\pa+\la)$ and $g(\pa,\la)=\partial-\bar\Delta\lambda$, where $c_0\in\C$.
\item[{\rm (iii)}] In the case $\Delta=1, \bar\Delta=-2$, $f(\pa,\la)=c_0\pa\la^2(\pa+\la)$ and $g(\pa,\la)=\partial^2+3\pa\la+2\lambda^2$, where $c_0\in\C$.
\end{itemize}
\item[{\rm (3)}] When  $b=2$, we have $\Delta-\bar\Delta=2$, $3$ or $\Delta=1,\bar\Delta=-3$. And
\begin{itemize}
\item[{\rm (i)}] In the case $\Delta-\bar\Delta=2$, $f(\pa,\la)=c_0\la^2(2\pa+\la)$ and $g(\pa,\la)=a_0$, where $c_0\in\C, a_0\neq0$.
\item[{\rm (ii)}] In the case $\Delta-\bar\Delta=3$, $f(\pa,\la)=c_0\pa\la^2(\pa+\la)$ and $g(\pa,\la)=\partial-\frac12\bar\Delta\lambda$, where $c_0\in\C$.
\item[{\rm (iii)}] In the case $\Delta=1,\bar\Delta=-3$, $f(\pa,\la)=c_0\la^2(4\pa^3+6\pa^2\la-\pa\la^2+3\la^3)$ and $g(\pa,\la)=\partial^2+\frac52\pa\la+\frac32\lambda^2$, where $c_0\in\C$.
\end{itemize}
\item[{\rm (4)}] When  $b=3$, we have $\Delta-\bar\Delta=3$, $4$ or $\Delta=1,\bar\Delta=-4$. And
\begin{itemize}
\item[{\rm (i)}] In the case $\Delta-\bar\Delta=3$, $f(\pa,\la)=c_0\pa\la^2(\pa+\la)$ and $g(\pa,\la)=a_0$, where $c_0\in\C, a_0\neq0$.
\item[{\rm (ii)}] In the case $\Delta-\bar\Delta=4$, $f(\pa,\la)=c_0\la^2(4\pa^3+6\pa^2\la-\pa\la^2+\bar\Delta\la^3)$ and $g(\pa,\la)=\partial-\frac13\bar\Delta\lambda$, where $c_0\in\C$.
\item[{\rm (iii)}] In the case  $\Delta=1,\bar\Delta=-4$, $f(\pa,\la)=c_0(\pa^4\la^2-10\pa^2\la^4-17\pa\la^5-8\la^6)$ and $g(\pa,\la)=\partial^2+\frac73\pa\la+\frac43\lambda^2$, where $c_0\in\C$.
\end{itemize}
\item[{\rm (5)}] When $b=4$, we have $\Delta-\bar\Delta=4$, $5$ or $\Delta=1,\bar\Delta=-5$. And
\begin{itemize}
\item[{\rm (i)}] In the case $\Delta-\bar\Delta=4$, $f(\pa,\la)=c_0\la^2(4\pa^3+6\pa^2\la-\pa\la^2+\bar\Delta\la^3)$ and $g(\pa,\la)=a_0$, where $c_0\in\C, a_0\neq0$.
\item[{\rm (ii)}] In the case $\Delta-\bar\Delta=5$, $f(\pa,\la)=0$ and $g(\pa,\la)=\partial-\frac14\bar\Delta\lambda$.
\item[{\rm (ii')}] In the case $\Delta=1,\bar\Delta=-4$, $f(\pa,\la)=c_0(\pa^4\la^2-10\pa^2\la^4-17\pa\la^5-8\la^6)$
 and $g(\pa,\la)=\partial+\lambda$, where $c_0\in\C$.

\item[{\rm (iii)}] In the case $\Delta=1,\bar\Delta=-5$, $f(\pa,\la)=0$ and $g(\pa,\la)=\partial^2+\frac{9}{4}\pa\la+\frac54\lambda^2$.
\end{itemize}

\item[{\rm (6)}] When  $b=5$, we have $\Delta-\bar\Delta=5$, $6$ or $\Delta=1,\bar\Delta=-6$. And
\begin{itemize}
\item[{\rm (i)}] In the case $\Delta-\bar\Delta=5$, $f(\pa,\la)=0$ and $g(\pa,\la)=a_0$, where $a_0\neq0$.
\item[{\rm (i')}] In the case $\Delta=1,\bar\Delta=-4$, $f(\pa,\la)=c_0(\pa^4\la^2-10\pa^2\la^4-17\pa\la^5-8\la^6)$ and $g(\pa,\la)=a_0$, where $c_0\in\C,$ $a_0\neq0$.

\item[{\rm (ii)}] In the case $\Delta-\bar\Delta=6$, $f(\pa,\la)=0$ and $g(\pa,\la)=\partial-\frac15\bar\Delta\lambda$.
\item[{\rm (ii')}] In the case $\Delta=\frac72\pm\frac{\sqrt{19}}2,\Delta-\bar\Delta=6, f(\pa,\la)=c_0\big(\pa^4\la^3-(2\bar\Delta+3)\pa^3\la^4-3\bar\Delta\pa^2\la^5-(3\bar\Delta+1)\pa\la^6-(\bar\Delta+\frac9{28})\la^7\big)$ and $g(\pa,\la)=\partial-\frac15\bar\Delta\lambda$, where $c_0\in\C$.

 \item[{\rm (iii)}] In the case $\Delta=1,\bar\Delta=-6$, $f(\pa,\la)=0$ and $g(\pa,\la)=\partial^2+\frac{11}5\pa\la+\frac65\lambda^2$.
 \end{itemize}
\item[{\rm (7)}] When  $b=6$, we have $\Delta-\bar\Delta=6$, $7$ or $\Delta=1, \bar\Delta=-7$. And
\begin{itemize}
\item[{\rm (i)}] In the case $\Delta-\bar\Delta=6$, $f(\pa,\la)=0$ and $g(\pa,\la)=a_0$, where $a_0\neq0$.
\item[{\rm (i')}] In the case $\Delta=\frac72\pm\frac{\sqrt{19}}2, \bar\Delta=-\frac52\pm\frac{\sqrt{19}}2, f(\pa,\la)=c_0\big(\pa^4\la^3-(2\bar\Delta+3)\pa^3\la^4-3\bar\Delta\pa^2\la^5-(3\bar\Delta+1)\pa\la^6-(\bar\Delta+\frac9{28})\la^7\big)$ and $g(\pa,\la)=a_0$, where $c_0\in\C,$ $a_0\neq0$.

\item[{\rm (ii)}]In the case $\Delta-\bar\Delta=7$, $f(\pa,\la)=0$ and $g(\pa,\la)=\partial-\frac16\bar\Delta\lambda$.
\item[{\rm (iii)}]In the case $\Delta=1,\bar\Delta=-7$, $f(\pa,\la)=0$ and $g(\pa,\la)=\partial^2+\frac{13}{6}\pa\la+\frac76\lambda^2$.
\end{itemize}
\item[{\rm (8)}] When  $b=-\frac23$, we have $f(\pa,\la)=0$ and the values $\Delta$ and $\bar\Delta$ along with $g(\partial,\lambda)$ are from Lemma \ref{lemm4-4}(1).
\item[{\rm (9)}] When  $b\neq-1, 1, 2, 3, 4, 5, 6$ or $-\frac23$,  $f(\pa,\la)=0$ and the values $\Delta$ and $\bar\Delta$ along with $g(\partial,\lambda)$ are from Lemma \ref{lemm4-4}(2).
\end{itemize}
\end{theo}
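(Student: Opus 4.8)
The extension class is encoded by the pair of polynomials $(f,g)$ in \eqref{3m*}, and the plan is to decouple the analysis of $f$ from that of $g$ and then match the resulting admissible data through the value of $b$. First I would record that the only module axioms imposing conditions on $(f,g)$ are \eqref{WL} and \eqref{WLH}: relation \eqref{WH} holds automatically because $H$ acts by zero on the submodule $\C[\partial]v_{\bar\Delta}$, while \eqref{WHL} follows from \eqref{WLH} via the operator skew-symmetry $[H^M_\lambda,L^M_\mu]=-[L^M_\mu,H^M_\lambda]$ together with the interchange $\lambda\leftrightarrow\mu$, so it produces no new constraint. Applying \eqref{WL} and \eqref{WLH} to $v_\Delta$ gives the two functional equations \eqref{wl} and \eqref{wlh}, in which $f$ and $g$ are entirely decoupled. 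Specializing at $\lambda=0$ yields \eqref{3l1}--\eqref{3lh1}; when $\alpha\neq\bar\alpha$ these force $g=0$ and make $f$ a coboundary, hence a trivial extension (Case 1 above), so I may assume $\alpha=\bar\alpha$.

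After the homogeneous shift $\partial\mapsto\partial+\alpha$, equations \eqref{wl} and \eqref{wlh} become \eqref{svl1} and \eqref{svlm1}. The key observation is that \eqref{svl1} is literally the Virasoro cocycle equation, so its non-coboundary solutions are exactly those of Theorem \ref{th4}, subject here to the extra requirement $\Delta,\bar\Delta\neq0$ (which discards the two entries of Theorem \ref{th4} having $\bar\Delta=0$). In parallel, \eqref{svlm1} is precisely the equation solved in Lemma \ref{lemm4-4}, whose nonzero solutions force $\Delta-\bar\Delta=m+b$ with $m\in\{0,1,2,3\}$, the value $m=3$ occurring only for $b=-\tfrac23$. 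Since the two equations constrain disjoint data, the space of cocycles decomposes as the direct sum of its $f$-part and its $g$-part, with all coboundaries lying in the $f$-part by Lemma \ref{lem4}; hence an extension is nontrivial exactly when $g\neq0$ or $f$ is not a Virasoro coboundary.

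It then remains to carry out a finite case analysis organized by $b$, matching the two lists through the common quantity $\Delta-\bar\Delta$. A nonzero $g$ is available only when $\Delta-\bar\Delta\in\{b,b+1,b+2\}$ (together with $b+3$ when $b=-\tfrac23$), whereas a non-coboundary $f$ is available only when $\Delta-\bar\Delta\in\{0,2,3,4,5,6\}$ with both weights nonzero (difference $1$ never occurs, as Theorem \ref{th4}(ii) needs $\bar\Delta=0$). For the exceptional values $b\in\{-1,1,2,3,4,5,6\}$ some shift $b+m$ lands in $\{0,2,3,4,5,6\}$, forcing the tabulated pairs: for each coincidence I would read off $g$ from Lemma \ref{lemm4-4} and substitute the relevant $\bar\Delta$ into the corresponding polynomial $f_i$ of Theorem \ref{th4}. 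For $b=-\tfrac23$ one uses Lemma \ref{lemm4-4}(1) and checks that no Virasoro $f$ survives (item (8)), and for every remaining $b$ none of $b,b+1,b+2$ is a non-coboundary difference, so $f=0$ and the answer is exactly Lemma \ref{lemm4-4}(2) (item (9)).

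The main obstacle is the bookkeeping rather than any single computation, and two points require genuine care. The first is enforcing $\bar\Delta\neq0$: for $b=-1$ the $m=2$ solution has $\bar\Delta=-1-b=0$ and must be discarded, which is why case (1) has only two sub-cases. The second is isolating the exceptional sub-cases $\Delta=1$ and $\Delta=\tfrac72\pm\tfrac{\sqrt{19}}2$ of Theorem \ref{th4}, where a special Virasoro polynomial $f$ (namely $f_{6'}$ or $f_7$) coexists with a nonzero $g$, producing the primed items (5)(ii$'$), (6)(i$'$), (6)(ii$'$) and (7)(i$'$). Once these are handled, reading off $f$ and $g$ for each admissible $(\Delta,\bar\Delta)$ completes the proof.
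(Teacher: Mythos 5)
Your overall strategy is exactly the paper's: the cocycle equations \eqref{wl}--\eqref{wlh} decouple into the Virasoro cocycle equation for $f$ (whose nontrivial solutions are those of Theorem \ref{th4}) and the equation \eqref{svlm1} for $g$ (solved by Lemma \ref{lemm4-4}), all coboundaries lie in the $f$-part by Lemma \ref{lem4}, and the classification reduces to matching the two lists through $\Delta-\bar\Delta$ for each $b$. All of this, including the two points of care you isolate (discarding the $m=2$ solution when $b=-1$ because $\bar\Delta=-1-b=0$, and the primed sub-cases where $f_{6'}$ or $f_{7}$ coexists with a nonzero $g$), agrees with the paper, whose own proof compresses the final matching step into a single sentence.

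However, your concluding claim --- that for $b\notin\{-1,1,2,3,4,5,6,-\frac23\}$ none of $b$, $b+1$, $b+2$ is a non-coboundary difference, so that $f=0$ whenever $g\neq0$ --- fails at $b=-2$. There the $m=2$ solution of Lemma \ref{lemm4-4}(2)(iii) gives $\Delta=1$ and $\bar\Delta=-1-b=1$, hence $\Delta-\bar\Delta=b+2=0$ with both weights nonzero; Theorem \ref{th4}(i) then supplies the nontrivial cocycle $f(\partial,\lambda)=c_0+c_1\lambda$, and since the $f$- and $g$-equations are decoupled it can be superposed with $g(\partial,\lambda)=\partial^2+\frac32\partial\lambda+\frac12\lambda^2$. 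Thus for $b=-2$ there exist nontrivial extensions with $\Delta=\bar\Delta=1$, $f=c_0+c_1\lambda$ and $g=c_2\bigl(\partial^2+\frac32\partial\lambda+\frac12\lambda^2\bigr)$, $(c_0,c_1,c_2)\neq(0,0,0)$, and your argument for item (9) breaks down on them. To be fair, this is not a defect of your write-up alone: item (9) of the theorem as stated has the same omission, and the paper's proof, which never spells out the matching, does not exclude this case either; your more explicit bookkeeping is precisely what makes the oversight visible. A complete classification must add $b=-2$ as an exceptional value alongside $-1,1,2,3,4,5,6,-\frac23$.
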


\vs{8pt}

\noindent\bf{ Acknowledgements.}\ \rm
{\footnotesize This work was supported by National Natural Science
Foundation of China (11301109) and China Scholarship Council.

\small


\begin{thebibliography}{9999}\vskip0pt\small
\def\re{\bibitem}\parindent=2ex\parskip=-2pt\baselineskip=-2pt


\bibitem {CK} Cheng S.-J., Kac V., Conformal modules, {\it Asian
J. Math.}, {\bf 1}(1) (1997) 181--193.

\bibitem{CKW1}Cheng S.-J., Kac V., Wakimoto, M., Extensions of conformal modules, {\it In Topological Field Theory, Primitive Forms and Related Topics, Progr. Math.} Vol. 160., Kashiwara, M., et al. Eds. Boston: Birkh\"{a}user, 1998; 79-129.	


\bibitem {CKW2} Cheng S.-J., Kac V., Wakimoto, M., Extensions of Neveu-Schwarz conformal modules, {\it J. Math. Phys.}, {\bf 41}(4) (2000)  2271--2294.


\bibitem {DK} D'Andrea A., Kac V., Structure theory of finite
conformal algebras, {\it Sel. Math., New Ser.}, {\bf 4} (1998)
377--418.


\bibitem {L} Lam Ngau, Extensions of modules over supercurrent conformal algebras, {\it Comm. Algebra}, {\bf 29}(7) (2001) 3061--3068.

\bibitem{LY} Ling K., Yuan L., Extensions of modules over Heisenberg-Virasoro conformal algebra, {\it Internat. J. Math.}, {\bf28}(5) (2017) 1750036, 13pp.


\bibitem{SY} Xu Y., Yue X., $\mathrm{W(a,b)}$ Lie conformal
algebra and its conformal module of rank one, {\it Algebra Colloq.}, {\bf22}(3) (2015) 405--412.


\bibitem{WY} Wu H., Yuan L.,  Classification of finite irreducible conformal modules over some Lie conformal algebras related to the Virasoro conformal algebra, {\it J. Math. Phys.}, {\bf58} (2017) 041701, 10pp.

\bibitem{YW} Yuan L., Wu H., Cohomology of Heisenberg-Virasoro conformal algebra, {\it J. Lie Theory}, {\bf 26} (2016) 1187--1197.

\bibitem{YW1} Yuan L., Wu H., Structures of $\mathrm{W(2,2)}$ Lie conformal algebra, {\it Open Math.}, {\bf 14} (2016) 629--640.

\end{thebibliography}
\end{document}